\documentclass[12pt]{amsart}

\usepackage[mathlines]{lineno}
\modulolinenumbers[1]

\usepackage[colorlinks,urlcolor=black,citecolor=black,linkcolor=black]{hyperref}
\usepackage{amsthm}
\usepackage{amssymb}
\usepackage{amsmath}
\usepackage{enumerate}
\usepackage{mathrsfs}
\usepackage{pdfrender}
\usepackage{multicol}


\newcommand{\nint}{\int_{-\infty}^{+\infty}}
\newcommand{\R}{\mathbb{R}}

\newcommand{\C}{\mathbb{C}}

\newcommand{\E}{{E}}

\renewcommand{\Im}{\mathrm{Im}}
\def\MR#1{\href{http://www.ams.org/mathscinet-getitem?mr=#1}{MR#1}}

\theoremstyle{definition}

\newtheorem{lemma}{Lemma}[]
\newtheorem{remark}{Remark}[]
\newtheorem{theorem}{Theorem}[]

\newtheorem{proposition}{Proposition}

\newtheorem*{definition*}{Definition}

\textwidth=400pt
\hoffset=-20pt
\parindent=0em
\parskip=0.3em
\begin{document}
\thispagestyle{empty}
\setlength{\itemindent}{0em}
\setlength{\itemsep}{0.3em}

\title[Stability in the quadratic-cubic Klein-Gordon equation]{Stability and instability of standing-wave solutions to one dimensional quadratic-cubic Klein-Gordon equations}

\author[D. Garrisi]{Daniele Garrisi\(^{*}\)}

\address{
Room 324, Sir Peter Mansfield Building\\
School of Mathematical Sciences\\
University of Nottingham Ningbo China\\
199 Taikang East Road\\
315100, Ningbo, China}

\email{daniele.garrisi@nottingham.edu.cn}

\thanks{%
This work has been supported by the \textsl{PDE Research Group} of School of Mathematical Sciences of the University of Nottingham Ningbo China and funded by the \textsl{FoSE New Researchers Grant}.}

\subjclass{Primary 35Q55, 47J35}

\keywords{Stability, Sturm-Liouville, Klein-Gordon equation}

\dedicatory{}

\begin{abstract}
We study the stability of standing-waves solutions to a scalar
non-linear Klein-Gordon equation in dimension one with a quadratic-cubic
non-linearity. Orbits are obtained by applying the semigroup 
generated by the negative complex unit multiplication on a critical
point of the energy constrained to the charge.
\end{abstract}
\maketitle
\section*{Introduction}
This work aims to classify the stability and instability of standing-wave
solutions to the non-linear Klein-Gordon equation which can be written in the form
\begin{linenomath}\begin{equation}
\label{eq.ham}
\frac{du}{dt} = JE'(u(t)),
\end{equation}\end{linenomath}
where \(E\) is functional defined on \(X_r := H^1 _r (\mathbb{R};\mathbb{C})\times
L^2 _r (\mathbb{R};\mathbb{C})\), the space of complex-valued functions which are
radially symmetric with respect to the origin, and \(J\) is an unbounded linear
transformation \(J\colon X_r^*\to X_r\). We will apply the abstract method devised 
in \cite{GSS87,GSS90}, and in \cite{Sha83,SS85}. We define the set
\begin{linenomath}\begin{equation}
\label{eq.orbit}
Orb(u_\omega) := \{T(s\omega)u_\omega\mid s\in\mathbb{R}\}
\end{equation}\end{linenomath}
where \(T(s)\) is a continous semi-group of operators acting on \(X_r\).
The equation to which we would like to apply this method is the 
quadratic-cubic one-dimensional non-linear Klein-Gordon equation
\begin{linenomath}\begin{equation}
\tag{NLKG}
\label{eq.NLKG}
(\partial_{tt}^2 - \partial_{xx}^2 + m^2)\phi - 3a|\phi|\phi + 
4b|\phi|^2\phi = 0,
\end{equation}\end{linenomath}
where \(m,a\) and \(b\) are positive real numbers. 
A standing-wave is a solution to \eqref{eq.NLKG} which can be written as
\begin{linenomath}\begin{equation*}
\phi(t,x) := e^{-i\omega t} R(x),\quad (t,x)\in\R\times\R
\end{equation*}\end{linenomath}
where $ \omega $ is a real number and $ R $ is a real-valued function of class
$ H^1 _r (\R;\R) $. Dispersive equations with competing powers non-linearities have been proposed for several applications. In \cite{BGM89} a one-dimensional cubic-quintic non-linear Schr\"odinger equation arises from boson gas interaction. The non-linear Klein-Gordon equation models the field equation for
spin-0 particles, \cite[\S2]{Lee81}; for a cubic-quintic non-linear Klein-Gordon
equation, \cite{Sha83} proved that there are stable and unstable standing-waves when \(\omega\) gets close to
\(13/16\) and \(1\), respectively, a result anticipated by the numerical inspection 
in \cite{And71}. We also quote the work of G.~Rosen, \cite{Ros65}, for a quintic non-linearity in 
dimension three.

Standing-waves can be obtained as 
minima of the energy functional 
\begin{linenomath}\begin{equation*}
\begin{split}
\E(\phi,\psi) :&= \frac{1}{2}\nint |\psi(x)|^2 dx + 
\frac{1}{2}\nint|\phi'(x)|^2 dx \\
&+ \frac{m^2}{2}\nint|\phi(x)|^2 dx
+\nint G(|\phi(x)|)dx
\end{split}
\end{equation*}\end{linenomath}
on the constraint
\begin{linenomath}\begin{equation}
\label{eq.q}
\begin{split}
Q(\phi,\psi) := -\Im\nint \psi(x)\overline{\phi(x)} dx = (\phi,i\psi)_2,
\end{split}
\end{equation}\end{linenomath}
which is the method followed in \cite{BBBM10}. However, existence of standing-waves will be proved by solving an initial value problem, using the approach
devised in \cite{IK93,Mae08,GG17} and properties of one-dimensional elliptic equations proved in \cite{BL83a}. In this way, we will be able to discuss
the stability of all the standing-waves, not just the ones arising as minima
of \(E\) constrained on \(Q\).
To prove the stability of the set 
\eqref{eq.orbit}, we will check Assumptions 1, 2 and 3 presented in the introduction
of \cite{GSS87}, construct a smooth one-parameter family \(u_\omega\) of solutions to the differential equation
\begin{linenomath}\begin{equation*}
E'(u_\omega)=\omega Q'(u_\omega)
\end{equation*}\end{linenomath}
and study the convexity of the function 
\begin{linenomath}\begin{equation*}
d(u_\omega) := E(u_\omega)-\omega Q(u_\omega).
\end{equation*}\end{linenomath}
The conclusions of the first three chapters hold as long as \(G\) is \(C^3\).
Considering a quadratic-cubic non-linearity, as we will do in Chapter~\ref{ch.4},
allows to find an exact form for \(d''(\omega)\).
We introduce the following real-valued function defined on \((0,+\infty)^3\)
\begin{linenomath}\begin{equation*}
\tau(a,b,m) := \frac{2m^2 b}{a^2}
\end{equation*}\end{linenomath}
which is convenient to classify the stability of all the standing-waves
for every \(m > 0\) and non-linearity \(-as^2 + bs^3\) with \(a,b > 0\).
\begin{theorem}[Stability and instability]
\label{thm.stability-and-instability}
There exist \(\tau_* > 2\) such that 
\begin{enumerate}[(i). ]
	\item if \(\tau(a,b,m)\geq\tau_*\), then \(Orb(u_\omega)\) is stable for every 
	\(\omega\in(\omega^*,m)\)
	\item if \(1 < \tau(a,b,m) < \tau_*\), there are \(\omega_1(\tau)\) and \(\omega_2(\tau)\) 
	such that \(Orb(u_\omega)\) is stable for
	 \begin{linenomath}\begin{equation*}
	 \omega\in (\omega_*,\omega_1(\tau))\cup(\omega_2(\tau),m)
	 \end{equation*}\end{linenomath} 
	 and unstable for 
	 \begin{linenomath}\begin{equation*}
	 \omega_1(\tau)\leq\omega\leq \omega_2(\tau).
	 \end{equation*}\end{linenomath}
\item if \(0 < \tau(a,b,m) \leq 1\), there exists \(\omega_3(\tau)\) such that 
\(Orb(u_\omega)\) is stable for 
\begin{linenomath}\begin{equation*}
\omega\in (\omega_*,\omega_3(\tau))
\end{equation*}\end{linenomath}
and unstable for 
\begin{linenomath}\begin{equation*}
\omega\in (\omega_3(\tau),m).
\end{equation*}\end{linenomath}
\end{enumerate}
\end{theorem}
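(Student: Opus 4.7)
The plan is to apply the Grillakis--Shatah--Strauss stability/instability criterion, which (given that Assumptions 1--3 have already been verified in the preceding chapters) reduces the entire classification to the study of the sign of $d''(\omega)$ on $(\omega_*,m)$: stability holds where $d''>0$ and instability where $d''<0$. Since $d'(\omega)=-Q(u_\omega)$, this means we need to study the monotonicity of the charge $\omega\mapsto Q(u_\omega)$ along the smooth family of standing waves, and the quadratic-cubic structure of the nonlinearity is exactly what should allow an explicit formula.

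First I would derive that explicit formula. The profile $R=R_\omega$ solves the ODE $-R''+(m^2-\omega^2)R = 3aR^2-4bR^3$ on the line with $R\in H^1_r$. Using the quadrature identity for one-dimensional elliptic equations (as in \cite{BL83a}), $R_\omega$ can be written implicitly through the phase-plane conservation law, which reduces the charge $Q(u_\omega)=\omega\|R_\omega\|_2^2$ to an explicit integral depending only on $\omega$ and the parameters $(a,b,m)$. A natural rescaling $R_\omega(x) = \lambda(\omega)\rho(\mu(\omega)x)$ with $\lambda,\mu$ chosen to normalize the coefficients should collapse the $(a,b,m,\omega)$-dependence to a two-variable problem in $(\tau,\omega/m)$ (or an equivalent scale-invariant variable), after which $d''(\omega)$ appears as the derivative of a single explicit function of one variable multiplied by an elementary prefactor depending on $\tau$.

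Next I would analyze the sign of that explicit function. At the endpoints one expects universal behavior: near $\omega\to m$ (small-amplitude limit) the standing waves approach the cubic-dominated regime, and near $\omega\to\omega_*$ the quadratic term dominates, so the limits of $d''$ can be read off and compared with the classical cubic/quadratic benchmarks. The key structural point is that the ``shape'' of $d''(\omega)$ as a function of $\omega$ is governed by $\tau$: I expect a single continuous curve in the $(\omega,\tau)$ plane along which $d''=0$, and the three cases in the theorem correspond to how a horizontal line $\tau=\mathrm{const}$ meets this curve. Case (iii) ($\tau\le 1$) should give exactly one intersection $\omega_3(\tau)$; case (ii) ($1<\tau<\tau_*$) gives two intersections $\omega_1(\tau)<\omega_2(\tau)$ enclosing an instability window; case (i) ($\tau\ge\tau_*$) gives no intersections and global stability. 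The threshold $\tau_*$ is defined as the value of $\tau$ at which the two roots in case (ii) coalesce, i.e.\ where $d''$ has a tangential zero; one then verifies $\tau_*>2$ by a direct estimate at this tangency.

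The main obstacle, I expect, will be the monotonicity/sign analysis of $d''(\omega)$ once the explicit integral formula is in hand. The integrand involves square roots of a cubic polynomial arising from the quadrature, so differentiation in $\omega$ produces improper integrals whose sign is not obvious. I would handle this either by passing to the Abelian-integral form and exploiting a Picard--Fuchs type relation (which in this one-dimensional setting reduces to an ODE in $\omega$ for $Q(u_\omega)$), or by finding a convenient change of variable under which the integrand becomes a rational function of $\omega$ with tractable critical points. Once the shape of the curve $\{d''=0\}$ in the $(\omega,\tau)$ plane is established, the three-case partition of the theorem follows by a direct case analysis, and the inequality $\tau_*>2$ is obtained by evaluating the defining condition at $\tau=2$ and checking that $d''$ still changes sign there.
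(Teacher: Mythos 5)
Your overall strategy coincides with the paper's: verify the Grillakis--Shatah--Strauss assumptions, reduce everything to the sign of \(d''(\omega)=-\sigma'(\omega)\) with \(\sigma(\omega)=\omega\|R_\omega\|_2^2\), use the one-dimensional quadrature identity \((R_\omega')^2=(m^2-\omega^2)R_\omega^2+2G(R_\omega)\) to turn the charge into an explicit integral in the amplitude variable, and read off the three cases from how the horizontal line \(\tau=\mathrm{const}\) meets the zero set of \(d''\). However, the step you yourself flag as ``the main obstacle'' is precisely the step you do not carry out, and your two proposed workarounds (a Picard--Fuchs relation for Abelian integrals, or an unspecified rationalizing substitution) are both unnecessary and left unresolved. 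For \(G(s)=-as^3+bs^4\) the quartic under the square root factors as \(s^2\,(m^2-\omega^2-2as+2bs^2)\), so after the substitution \(s=R_\omega(x)\) the integral \(\int_0^{R_*(\omega)} s\,ds/\sqrt{m^2-\omega^2-2as+2bs^2}\) is elementary: completing the square yields the closed form \(\sigma(\omega)=\frac{\omega a}{(2b)^{3/2}}\bigl(\tfrac{1}{2}\ln\tfrac{1+\alpha}{1-\alpha}-\alpha\bigr)\) with \(\alpha(\omega)=\sqrt{2b(m^2-\omega^2)}/a\). The entire classification then rests on showing that the comparison function \(k_2(\alpha)=\tfrac{1-\alpha^2}{2\alpha}\ln\tfrac{1+\alpha}{1-\alpha}+\alpha^2\) tends to \(1\) at both endpoints of \((0,1)\) and has a single interior maximum; the paper proves this unimodality by writing \(\alpha^2k_2'\) as the sum of a strictly decreasing and a strictly increasing function. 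You only ``expect'' the zero set of \(d''\) to have this shape; without an argument for it, the partition into zero, two, or one crossings in cases (i)--(iii) is unsupported.

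A second concrete problem is your plan to prove \(\tau_*>2\) ``by a direct estimate at the tangency,'' i.e.\ by evaluating the defining condition at \(\tau=2\). That check would return the opposite answer: the proof in the paper only establishes \(\tau_*>1\) (from \(k_2\to 1\) at both endpoints together with \(k_2'\to-\infty\) as \(\alpha\to1\)), and the paper's stated numerics place \(\tau_*\in(1.13,1.14)\), so the ``\(\tau_*>2\)'' in the theorem header is inconsistent with the body of the paper itself. Be aware that the quantitative claim you propose to certify is not the one any correct version of this argument can deliver.
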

The constant \(\tau_*\) is an irrational number as it will be clear
from the construction. Numerical approximations show that 
\(\tau_*\in (1.13,1.14)\), while the proof that it is bigger than 1 
follows from simple properties of an auxiliary function (labelled \(k_2\) in
the proof of the theorem).
Though different differential equations and systems are studied, for the technique deployed other references as \cite[Examples A,C,D,E]{GSS87},
\cite{Oht95,Mae08} and \cite{ACF14}
are worth to be mentioned. In particular, 
\cite{Oht95} addresses the quadratic-cubic non-linear Schr\"odinger equation, in the case where \(a < 0\) and \(b > 0\). For the sake of clarity, we remark that the set in \eqref{eq.orbit} is different from the \textit{ground-state},
defined as
\begin{linenomath}\begin{equation*}
\Gamma := \{e^{-is\omega} u_\omega (\cdot - y)\mid (s,y)\in\R^2\}
\end{equation*}\end{linenomath}
which includes argument translation together with a phase change. The stability of the ground-state
for the non-linear Klein-Gordon equation and systems has been studied in \cite{BBBM10,Gar12}, 
while we refer to \cite{BBGM07,GJ16,LNW16,LZ21} for the non-linear Schr\"odinger equation and systems. In these works, 
the Concentration-Compactness Lemma of \cite{Lio84a,Lio84b} is an essential ingredient.
Other works we would like to include are \cite{BWW07,BJS16,Cor16a,NTV14}.
\section{Preliminary notations}
In this section, we write the non-linear Klein-Gordon equation in the Hamiltonian
form \eqref{eq.ham} and check that the assumptions made in the introduction of
\cite{GSS87} are verified. We follow a similar scheme to \cite[Example~A,~\S6]{GSS87}, where the authors proved that traveling waves are not stable. 
We set
\begin{linenomath}\begin{equation}
\label{eq.cubic-quartic}
G(s) := -as^3 + bs^4,\quad F(s)=\frac{1}{2} m^2 s^2 + G(s)
\end{equation}\end{linenomath}
for every \(s\geq 0\).
Given \(u_1 = (\phi_1,\psi_1)\) and
\(u_2 = (\phi_2,\psi_2)\) in \(X_r\) we define the
inner product
\begin{linenomath}\begin{equation*}
(u_1,u_2)_{X_r} := m^2(\phi_1,\phi_2)_2 + (\phi_1',\phi_2')_2 + (\psi_1,\psi_2)_2
\end{equation*}\end{linenomath}
where
\begin{linenomath}\begin{equation*}
(f,g)_2 := \text{Re}\nint f(x)\overline{g(x)}dx.
\end{equation*}\end{linenomath}
We use the notation
\begin{linenomath}
\begin{equation}
\label{eq.Riesz}
I\colon X_r\to X_r^*
\end{equation}
\end{linenomath}
for the Riesz isomorphism between real Hilbert spaces.
We use \(\langle \xi,u\rangle\) to denote \(\xi(u)\) for every \(\xi\in X_r^*\) and \(u\in X_r\). We denote by 
\(\mathscr{L}(X,Y)\) and \(\mathscr{L}_c (X,Y)\) the space of bounded and compact operators between two Hilbert spaces \(X\) and \(Y\), and
by \(GL(X)\) the group of invertible bounded operators on \(X\). By \(C^k_0(\R,\C)\) we denote the space of continuously differentiable (to the order \(k\)) functions
with derivative vanishing at infinity up to the order \(k\). We will also use the notation \(\mathscr{R}(\zeta)\) for the
resolvent of \(-\Delta\colon H^2\subseteq L^2\to L^2\), for
every \(\zeta\) in the complement of the spectrum of \(-\Delta\).

Let \(I_0\colon L^2_r\times L^2_r\to X_r^*\) be the bounded, injective linear transformation defined as 
\begin{linenomath}\begin{equation}
\label{eq.shatah-identification}
\langle I_0(u),(h,k)\rangle = (\phi,h)_2 + (\psi,k)_2.
\end{equation}\end{linenomath}
for every \(u := (\phi,\psi)\in X_r\) and \((h,k)\in X_r\). In the rest of
this section, we introduce notations for operators and semigroups of operators
necessary to the abstract framework devised in \cite{GSS87}, and check that they satisfy assumptions required therein.
\paragraph*{The operator \(J\)}
Let \(J\colon D(J)\subseteq X_r^*\to X_r\) be the unbounded operator with 
dense domain \(D(J) := I_0(L^2 _r\times H^1 _r)\)
such that 
\(
JI_0(\psi,\phi) = (\phi,-\psi).
\)
\begin{proposition}
\(J\) is closed, onto and skew-adjoint. That is, \(\langle \xi,J\eta\rangle = - \langle \eta,J\xi\rangle\) for every \(\xi,\eta\in D(J)\subseteq X_r^*\).
\end{proposition}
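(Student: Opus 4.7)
The strategy is to reduce each of the three assertions to a direct manipulation of coordinates on $D(J)$, using the injectivity of $I_0$ (so that writing $\xi = I_0(\psi,\phi)$ with $(\psi,\phi)\in L^2_r\times H^1_r$ is unambiguous) together with the continuity of $I_0\colon L^2_r\times L^2_r\to X_r^*$ and the continuous embedding $H^1_r\hookrightarrow L^2_r$.

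For closedness I take a sequence $\xi_n = I_0(\psi_n,\phi_n)\in D(J)$ with $\xi_n\to\xi$ in $X_r^*$ and $J\xi_n=(\phi_n,-\psi_n)\to v=(\phi,-\psi)$ in $X_r = H^1_r\times L^2_r$. Convergence in the product topology gives $\phi_n\to\phi$ in $H^1_r$ (hence in $L^2_r$) and $\psi_n\to\psi$ in $L^2_r$, so $(\psi_n,\phi_n)\to(\psi,\phi)$ in $L^2_r\times L^2_r$. Continuity of $I_0$ forces $\xi_n\to I_0(\psi,\phi)$ in $X_r^*$, and uniqueness of limits yields $\xi = I_0(\psi,\phi)\in D(J)$ with $J\xi = (\phi,-\psi)=v$.

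Surjectivity is immediate from the definition: given $(\phi,\eta)\in X_r$, set $\psi:=-\eta\in L^2_r$; since $\phi\in H^1_r$, the pair $(\psi,\phi)$ lies in $L^2_r\times H^1_r$, so $I_0(\psi,\phi)\in D(J)$ and $JI_0(\psi,\phi)=(\phi,-\psi)=(\phi,\eta)$.

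The skew-symmetry identity is a one-line computation: for $\xi_i = I_0(\psi_i,\phi_i)\in D(J)$, $i=1,2$,
\begin{linenomath}\begin{equation*}
\langle\xi_1,J\xi_2\rangle = \langle I_0(\psi_1,\phi_1),(\phi_2,-\psi_2)\rangle = (\psi_1,\phi_2)_2 - (\phi_1,\psi_2)_2,
\end{equation*}\end{linenomath}
and the analogous expression for $\langle\xi_2,J\xi_1\rangle$ is its negative by symmetry of $(\cdot,\cdot)_2$. There is no real obstacle here; the only point that requires attention is keeping track of which coordinate lies in $H^1_r$ versus $L^2_r$, since the asymmetry between the two factors of $X_r$ is precisely what makes $D(J)$ a proper subspace of $X_r^*$ rather than all of it.
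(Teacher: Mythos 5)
Your proof is correct and follows essentially the same route as the paper's: write elements of $D(J)$ as $I_0(\psi,\phi)$, exploit the componentwise convergence in $X_r=H^1_r\times L^2_r$ for closedness, exhibit an explicit preimage for surjectivity, and verify skew-symmetry by the symmetry of $(\cdot,\cdot)_2$. The only cosmetic difference is that for closedness you invoke the boundedness of $I_0$ on $L^2_r\times L^2_r$ directly, whereas the paper passes to the limit in the pairing $\langle\xi_n,(h,k)\rangle=(\psi_n,h)_2+(\phi_n,k)_2$ against test vectors; these are the same estimate.
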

\begin{proof}
Let \((\xi_n,J\xi_n)\) be a sequence converging to \((\xi,u)\in X_r^*\times X_r\).
Set \(u := (\alpha,\beta)\). There exists a sequence \((\psi_n,\phi_n)\in L^2_r\times H^1_r\) such that
\(I_0 (\psi_n,\phi_n) = \xi_n\) and 
\(\lim_{n\to\infty} (\phi_n,-\psi_n) = (\alpha,\beta)\). For every \((h,k)\in X_r\) we have \((\psi_n,h)_2 + (\phi_n,k)_2 = \langle\xi_n,(h,k)\rangle\).
Taking the limit, we obtain \(\xi=I_0(-\beta,\alpha)\), 
proving that \(\xi\in D(J)\). Also, \(J\xi = u\). Therefore, \(J\) is closed.
Given \(\xi = I_0(\psi_1,\phi_1)\) and \(\eta = I_0 (\psi_2,\phi_2)\), there holds 
\begin{linenomath}\begin{equation*}
\begin{split}
\langle \xi,J\eta\rangle &= \langle \xi,JI_0(\psi_2,\phi_2)\rangle = \langle \xi,(\phi_2,-\psi_2)\rangle \\
&= \langle I_0 (\psi_1,\phi_1),(\phi_2,-\psi_2)\rangle = (\psi_1,\phi_2)_2 - (\phi_1,\psi_2)_2 \\
&= -\big[(\psi_2,\phi_1)_2 - (\phi_2,\psi_1)_2\big] = - \langle I_0 (\psi_2,\phi_2),(\phi_1,-\psi_1)\rangle \\
&= 
- \langle I_0 (\psi_2,\phi_2),J I_0 (\psi_1,\phi_1)\rangle = -
\langle \eta,J\xi\rangle.
\end{split}
\end{equation*}\end{linenomath}
Finally, \(J\) is onto. Given \(u=(\phi,\psi)\in H^1_r\times L^2_r\), 
\(I(-\psi,\phi)\in D(J)\) and
\(JI(-\psi,\phi) = u\).
\end{proof}
\paragraph*{The semigroup \(T\)}
We define \(T(s)\colon X_r\to X_r\) as
\(T(s)(\phi,\psi) = e^{-is}(\phi,\psi)\)
for every \(s\in\mathbb{R}\).
\begin{proposition}
\(T(s)J=JT^*(-s)\) for every \(s\in\R\).
\end{proposition}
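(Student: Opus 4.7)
The plan is to verify the identity by direct computation on a generic element $\xi = I_0(\psi_1,\phi_1)$ of $D(J)$, with $\psi_1\in L^2_r$ and $\phi_1\in H^1_r$. The only substantive point is identifying the action of $T^*(-s)$ on the range of $I_0$; once this is done, both sides reduce to the same explicit pair, and the equality of domains follows for free.

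First I would compute $T^*(-s)\xi$. Since $T(-s)u=e^{is}u$ is bounded on $X_r$, its Banach adjoint is characterised by $\langle T^*(-s)\xi,(h,k)\rangle=\langle\xi,e^{is}(h,k)\rangle$, so from \eqref{eq.shatah-identification},
\begin{equation*}
\langle T^*(-s)I_0(\psi_1,\phi_1),(h,k)\rangle=(\psi_1,e^{is}h)_2+(\phi_1,e^{is}k)_2.
\end{equation*}
Because $(\cdot,\cdot)_2$ takes the real part of the complex $L^2$ pairing, a scalar phase passes across as its conjugate: $(f,e^{is}g)_2=\Re\int f\,\overline{e^{is}g}\,dx=\Re\int(e^{-is}f)\bar g\,dx=(e^{-is}f,g)_2$. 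Hence
\begin{equation*}
T^*(-s)I_0(\psi_1,\phi_1)=I_0(e^{-is}\psi_1,e^{-is}\phi_1).
\end{equation*}
Since $e^{-is}\phi_1\in H^1_r$, the right-hand side lies in $D(J)$, and applying the same argument with $-s$ in place of $s$ shows that $T^*(-s)$ preserves $D(J)$; this yields the domain identity $D(JT^*(-s))=D(J)=D(T(s)J)$.

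With the adjoint in hand, evaluating $J$ on both sides is immediate:
\begin{equation*}
JT^*(-s)\xi = JI_0(e^{-is}\psi_1,e^{-is}\phi_1) = (e^{-is}\phi_1,-e^{-is}\psi_1) = e^{-is}(\phi_1,-\psi_1) = T(s)J\xi.
\end{equation*}
The only place requiring care is the phase arithmetic inside the adjoint computation, where one has to keep straight that duality under the real-part pairing replaces $e^{is}$ by $e^{-is}$; beyond this sign-sensitive but elementary step, no further obstacle arises.
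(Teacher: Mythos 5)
Your proof is correct and follows essentially the same route as the paper: identify $T^*(-s)$ on the range of $I_0$ via the real-part pairing, obtaining $T^*(-s)I_0(\psi_1,\phi_1)=I_0(e^{-is}\psi_1,e^{-is}\phi_1)$, and then apply $J$ to both sides. Your explicit remark that $T^*(-s)$ preserves $D(J)$ is a slightly more careful justification of the domain equality than the paper's one-line assertion, but the substance is identical.
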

\begin{proof}
The two unbounded operators have the same domain \(D(J)\). Given \(\xi\in D(J)\), there exists
\((\psi,\phi)\in L^2_r\times H^1_r\) such that \(\xi = I_0(\psi,\phi)\). Then
\begin{linenomath}\begin{equation*}
\begin{split}
\langle T^*(-s)\xi,(h,k)\rangle &= \langle \xi, (e^{is}h,e^{is}k)\rangle \\
&= 
(\psi,e^{is}h)_2 + (\phi,e^{is}k)_2 = (e^{-is}\psi,h)_2 + (e^{-is}\phi,k)_2.
\end{split}
\end{equation*}\end{linenomath}
Therefore 
\begin{linenomath}\begin{equation*}
\begin{split}
JT^*(-s)I_0 (\psi,\phi) &= JI_0(e^{-is}\psi,e^{-is}\phi) \\
&= (e^{-is}\phi,-e^{is}\psi) =
e^{-is}(\phi,-\psi) = T(s)JI_0(\psi,\phi).
\end{split}
\end{equation*}\end{linenomath}
\end{proof}
\paragraph*{The operator \(B\)}
We define the bounded operator \(B\colon X_r\to X_r^*\) as \(B(u) := I_0 (i\psi,-i\phi)\).
\begin{proposition}
For every \(u,v\in X_r\), there holds 
\begin{enumerate}[(i).]
\item \(Q(u) = \frac{1}{2}\langle Bu,u\rangle\)
\item \(JB(u) = T'(0)u\)
\item \(\langle Bu,v\rangle = \langle Bv,u\rangle\).
\end{enumerate}
\end{proposition}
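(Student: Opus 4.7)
All three items reduce to unwinding the definitions of $B$, $I_0$, $J$, $T$ and the real inner product $(\cdot,\cdot)_2$; the only non-trivial ingredients are the elementary identities $\Re(iz) = -\Im(z)$ and $\Im(\bar z) = -\Im(z)$ for $z\in\C$, together with the definition (\ref{eq.shatah-identification}) and the formula (\ref{eq.q}) for $Q$. I would treat the three items in order, since (i) supplies the quadratic form underlying $Q$, (ii) is essentially a one-line derivative computation, and (iii) is immediate from the expansion used in (i).

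\textbf{Plan for (i) and (ii).} For (i), I would write $u = (\phi,\psi)$ and expand
\begin{linenomath}\begin{equation*}
\langle Bu,u\rangle = \langle I_0(i\psi,-i\phi),(\phi,\psi)\rangle = (i\psi,\phi)_2 + (-i\phi,\psi)_2.
\end{equation*}\end{linenomath}
Both summands should equal $-\Im\nint\psi\bar\phi\,dx$ (the first via $\Re(iz) = -\Im(z)$, the second after an additional application of $\Im(\bar z) = -\Im(z)$), so the sum becomes $-2\Im\nint\psi\bar\phi\,dx = 2Q(u)$. For (ii), I would compute directly $JB(u) = JI_0(i\psi,-i\phi) = (-i\phi,-i\psi) = -iu$; differentiating $T(s)u = e^{-is}u$ at $s=0$ gives the same expression. (Note that $(i\psi,-i\phi)\in L^2_r\times H^1_r$ for every $u\in X_r$, so $Bu\in D(J)$ and the identity holds on all of $X_r$.)

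\textbf{Plan for (iii).} Setting $u = (\phi_1,\psi_1)$ and $v = (\phi_2,\psi_2)$ and repeating the expansion from (i), I expect to obtain
\begin{linenomath}\begin{equation*}
\langle Bu,v\rangle = -\Im\nint\bigl(\psi_1\bar\phi_2 + \psi_2\bar\phi_1\bigr)\,dx,
\end{equation*}\end{linenomath}
which is visibly invariant under $u\leftrightarrow v$, yielding $\langle Bu,v\rangle = \langle Bv,u\rangle$. As an alternative route, once one verifies that $(u,v)\mapsto\langle Bu,v\rangle$ is $\R$-bilinear, (iii) follows from (i) by polarization.

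\textbf{Main obstacle.} There is no conceptual difficulty; the only real risk is bookkeeping. The factor $1/2$ in (i) and the symmetry in (iii) both hinge delicately on the sign behavior of $\Re$ and $\Im$ under multiplication by $i$ and under complex conjugation, so I would track each sign explicitly rather than rely on pattern matching.
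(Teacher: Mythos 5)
Your proposal is correct and follows essentially the same route as the paper: all three items are verified by unwinding the definitions of \(B\), \(I_0\), \(J\) and \(T\) and tracking the identities \(\Re(iz)=-\Im(z)\) and \(\Im(\bar z)=-\Im(z)\), exactly as in the paper's proof. The only caveat is the parenthetical alternative for (iii): polarizing the quadratic form from (i) yields only the symmetrized quantity \(\langle Bu,v\rangle+\langle Bv,u\rangle\), not the equality of the two individual terms, so that route would not by itself establish (iii) --- but your primary direct expansion does.
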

\begin{proof}
(i).
\begin{linenomath}\begin{equation*}
\begin{split}
\frac{1}{2}\langle Bu,u\rangle &= \frac{1}{2}\langle B(\phi,\psi),(\phi,\psi)\rangle
= \frac{1}{2} (i\psi,\phi)_2 - \frac{1}{2} (i\phi,\psi)_2\\
&= 
\frac{1}{2}\text{Re}\nint i\psi\overline{\phi}dx + \frac{1}{2}\text{Re}\nint -i\phi\overline{\psi}dx\\
&=-\frac{1}{2}\text{Im}\nint\psi\overline{\phi}dx + \frac{1}{2}\text{Re}\nint \overline{-i\phi}\psi dx\\
&=-\frac{1}{2}\text{Im}\nint\psi\overline{\phi}dx + \frac{1}{2}\text{Re}\nint i\overline{\phi}\psi dx\\
&=-\frac{1}{2}\text{Im}\nint\psi\overline{\phi}dx - \frac{1}{2}\text{Im}\nint\overline{\phi}\psi dx = Q(u).
\end{split}
\end{equation*}\end{linenomath}
(ii). \(T'(0)\in\mathscr{L}(X_r,X_r)\) is the infinitesimal generator of the semigroup \(T\) and \(T'(0)u = -iu\). We obtain the equality between the
two operators from
\begin{linenomath}\begin{equation*}
JB(u) = JI_0(i\psi,-i\phi) = (-i\phi,-i\psi) = T'(0)u.
\end{equation*}\end{linenomath}
(iii). We set \(u := (\phi_1,\psi_1)\) and \(v := (\phi_2,\psi_2)\). Then
\begin{linenomath}\begin{equation*}
\begin{split}
\langle Bu,v\rangle &= (i\psi_1,\phi_2)_2 - (i\phi_1,\psi_2)_2 = -(\psi_1,i\phi_2)_2 + (\phi_1,i\psi_2)_2 \\
&= (i\psi_2,\phi_1)_2 -(i\phi_2,\psi_1)_2 = \langle Bv,u\rangle.
\end{split}
\end{equation*}\end{linenomath}
\end{proof}
\section{Regularity assumptions}
In this section, we check the regularity assumptions listed in the introduction of \cite{GSS87}, and
we construct a one-parameter family \(u_\omega\) of solutions
to \(E'(u_\omega)=\omega Q'(u_\omega)\). The whole construction relies on results of
differentiability of Nemitski operators, and compactness of certain linear operators.
We summarize there results in the next proposition. 
\begin{proposition}\label{prop.nem}
\

\begin{enumerate}[(i).]
\itemsep=0.3em
\itemindent=0em
\item Given \(g\) in \(C^1 (\mathbb{R})\) such that \(g(0) = 0\),
the map \(\phi\to g\circ\phi\) is \(C(H^1 _r,H^1_r)\)
\item if \(g\) is \(C^2(\mathbb{R})\) and \(g(0)=g'(0) = 0\), then 
\(\phi\to g\circ\phi\) is \(C^1 (H^1_r,H^1_r)\) and
\begin{linenomath}\begin{equation*}
\langle\mathscr{F} '(\phi_0),\phi\rangle = g'(\phi_0)\phi
\end{equation*}\end{linenomath}
where \(\mathscr{F} (\phi) := g\circ\phi\).
\item Given \(a\in C^1 _0 (\mathbb{R},\mathbb{R})\), the
linear operator mapping \(\phi\) to \(\mathscr{R}(\lambda) (a(x)\phi(x))\) is \(\mathscr{L}_c (H^1_r,H^1_r)\)
for every \(\lambda < 0\)
\item 
\label{item.prop.nem.4}
\(E\) and \(Q\) are \(C^2 (H^1_r (\mathbb{R},\mathbb{C}),\mathbb{R})\). Moreover, given \(u_0 := (\phi_0,\psi_0)\)
\begin{linenomath}
\begin{equation*}
\begin{array}{c}
\langle E'(u_0),(\phi,\psi)\rangle = (\phi_0',\phi')_2 + (\psi_0,\psi)_2 
+ \left(G'(|\phi_0|)\frac{\phi_0 (x)}{|\phi_0(x)|},\phi\right)_2\\[0.7em]
\langle Q'(u_0),(\phi,\psi)\rangle = (\phi_0,i\psi)_2 + (\phi,i\psi_0)_2
\end{array}
\end{equation*}
\end{linenomath}
for every \((\phi,\psi)\in X_r\) and 
\begin{linenomath}\begin{equation*}
\begin{split}
\langle E'' (u_0)(\phi,\psi),(h,k)\rangle =&\, 
(\phi',h')_2 + (\psi,k)_2 + m^2 (\phi,h)_2 \\
+&\, (G''(|\phi_0|)\text{Re}(\phi),h)_2 + 
\big(|\phi_0|^{-1} G'(|\phi_0|)\,i\text{Im}(\phi),h\big)_2
\end{split}
\end{equation*}\end{linenomath}
and
\begin{linenomath}\begin{equation*}
\langle Q''(u_0)(\phi,\psi),(h,k)\rangle = (\phi,ik)_2 + (h,i\psi)_2.
\end{equation*}\end{linenomath}
\end{enumerate}
\end{proposition}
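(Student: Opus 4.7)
The plan is to handle (i)--(iii) as standard Nemitski/resolvent lemmas and assemble (iv) from them together with direct computation. For (i), the driving ingredient is the one-dimensional Sobolev embedding $H^1(\R)\hookrightarrow L^\infty(\R)$, giving $\|\phi\|_\infty \leq C\|\phi\|_{H^1}$. Combined with $g(0) = 0$ and $g \in C^1$, the mean value theorem produces $|g\circ\phi| \leq M(\|\phi\|_\infty)|\phi|$ pointwise, and the $H^1$ chain rule yields $(g\circ\phi)' = g'(\phi)\phi' \in L^2$. Sequential continuity follows by dominated convergence, using that $\phi_n \to \phi$ in $H^1$ forces uniform convergence and hence uniform bounds on $g'(\phi_n)$. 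For (ii), the candidate Fr\'echet derivative is $\phi \mapsto g'(\phi_0)\phi$; it is bounded on $H^1_r$ because $g'(0) = 0$ puts $g'(\phi_0)$ into $H^1 \cap L^\infty$ by (i) applied to $g'$. A uniform Taylor estimate for $g$ supplies differentiability, and continuity of $\phi_0 \mapsto g'(\phi_0)\,\cdot\,$ in $\mathscr{L}(H^1_r, H^1_r)$ reduces once more to (i) applied to $g'$.

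For (iii), I would pick a bounded sequence $(\phi_n) \subseteq H^1_r$ and extract a subsequence converging weakly in $H^1_r$ and strongly in $L^2_{\mathrm{loc}}$ by Rellich--Kondrachov. Since $a \in C^1_0$ vanishes at infinity, for any $\varepsilon > 0$ there is $R$ with $|a(x)| < \varepsilon$ on $|x| > R$; splitting the integral and combining with the local $L^2$ convergence promotes $a\phi_n \to a\phi$ to strong convergence on all of $\R$. Composition with the bounded operator $\mathscr{R}(\lambda)\colon L^2 \to H^2 \hookrightarrow H^1$ then transfers this to strong $H^1_r$ convergence, which is the desired compactness.

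Part (iv) reduces to verifying $C^2$ regularity of the nonlinear functional $\mathscr{G}(\phi) := \int G(|\phi|)\,dx$, since the remaining terms are bilinear and their derivatives are immediate. With $G(s) = -as^3 + bs^4$ I would split $G(|\phi|) = -a(|\phi|^2)^{3/2} + b(|\phi|^2)^2$: the quartic piece is a polynomial in $(\Re\phi, \Im\phi)$ whose derivatives are handled termwise using (i) and (ii), while the cubic piece is the source of non-smoothness, since $\phi \mapsto |\phi|^3$ is only $C^2$ at the vanishing set of $\phi$. The first and second derivative formulas are then obtained by computing $\partial_\varepsilon G(|\phi_0 + \varepsilon h|)|_{\varepsilon=0}$ off $\{\phi_0 = 0\}$ via the chain rule and extending across that set using that both $G'(s)/s = -3as + 4bs^2$ and $G''(s) = -6as + 12bs^2$ tend to zero at $s = 0$. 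This also explains the two distinct kernels in the formula for $E''$: $G''(|\phi_0|)$ acts on the ``radial'' direction $\Re(\phi)$ aligned with $\phi_0$, while $|\phi_0|^{-1}G'(|\phi_0|)$ acts on the ``tangential'' direction $i\Im(\phi)$.

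The main obstacle is precisely this low regularity of the cubic term at the vanishing set of $\phi_0$: it admits first and second Fr\'echet derivatives but not a third, which is the deepest regularity compatible with the paper's $C^2$ framework and is consistent with the remark in the introduction that the analysis holds only up to $G\in C^3$. Once this point is managed, the remaining verifications are bookkeeping on top of (i)--(iii).
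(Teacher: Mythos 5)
Your proposal is correct and follows essentially the same route as the paper, whose own proof is only a sketch: it adapts \cite[Theorem~2.6]{AP93} via the embedding \(H^1_r(\R)\subseteq L^\infty(\R)\) for (i), (ii) and (iv), and for (iii) uses the decay of \(a\) to reduce to compactness of the resolvent over bounded domains, exactly the mechanism behind your Rellich--Kondrachov plus tail-estimate argument. Your added detail in (iv) --- isolating \(|\phi|^3\) as the piece that is \(C^2\) but not \(C^3\), with the kernels \(G''(|\phi_0|)\) and \(|\phi_0|^{-1}G'(|\phi_0|)\) acting on the radial and tangential components --- correctly fills in what the paper leaves implicit.
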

In (i) and (ii) and (iv), the proof follows from the application of ideas illustrated in 
\cite[Theorem~2.6]{AP93}. The quoted theorem proves
\(C(L^p(\Omega),L^{\alpha p}(\Omega))\) regularity, with \(\Omega\) bounded domain of \(\mathbb{R}^n\).
However, their technique can be adapted to our setting, by taking advantage
of the bounded inclusion \(H^1 _r\subseteq L^\infty\). 
In (iii), roughly speaking, 
the multiplication by \(a\) allows to reduce to bounded domains of \(\R\), where 
it is known that the resolvent of the Laplacian is compact.
In the remainder of this section, we check that Assumptions 1, 2 and 3 of \cite{GSS87}.
\subsection*{Assumption~1}
According to \cite[\S3]{GV89}, \eqref{eq.NLKG} is
locally well-posed in \(X_r\), meaning that for every initial
datum \((\phi_0,\psi_0)\) there exists a unique solution 
\(
\phi\colon [0,T)\times\mathbb{R}\to\mathbb{C}
\)
to \eqref{eq.NLKG} such that
\begin{linenomath}\begin{equation*}
\phi(t,\cdot)\in C_t H\sp 1 _x ([0,T)\times\R)\cap 
C\sp 1 _t L\sp 2 _x ([0,T)\times\R).
\end{equation*}\end{linenomath}
Moreover, \(E\) and \(Q\) are constant on the trajectory
\((\phi(t,\cdot),\partial_t\phi(t,\cdot))\).
\begin{proposition}
\label{prop.ground-state}
Given \(a,b\) and \(m\) positive real numbers, there exists \(\omega_* > 0\) and one-parameter family 
\(u_\omega := (R_\omega,-i\omega R_\omega)\) such that 
\begin{enumerate}[\ \ (a).]
\item \((\omega_*,m)\ni\omega\mapsto u_\omega\in X_r\)
\item \(R_\omega\) is positive, symmetric-decreasing and \(xR_\omega'(x) < 0\)
unless \(x = 0\).
\end{enumerate}
\end{proposition}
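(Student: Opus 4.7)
The plan is to solve the equation $E'(u_\omega)=\omega Q'(u_\omega)$ via the ansatz $\phi(t,x) = e^{-i\omega t} R(x)$ with $R$ real non-negative, reducing \eqref{eq.NLKG} to an autonomous ODE for $R$. Plugging in the ansatz and using $G'(s)=-3as^2+4bs^3$, the standing-wave equation becomes
\begin{linenomath}\begin{equation*}
R'' = V'(R), \qquad V(R) := \tfrac12(m^2-\omega^2) R^2 - aR^3 + bR^4,
\end{equation*}\end{linenomath}
which admits the first integral $\tfrac12(R')^2 = V(R)$ along any homoclinic orbit at the origin. Writing $V(R) = R^2 h(R)$ with $h(R) = bR^2 - aR + \tfrac12(m^2-\omega^2)$, such an orbit exists iff $h$ has a positive root, i.e.\ iff $a^2 \geq 2b(m^2-\omega^2)$. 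Accordingly, set
\begin{linenomath}\begin{equation*}
\omega_*^2 := \max\{0,\, m^2 - a^2/(2b)\}, \qquad R_0(\omega) := \frac{a - \sqrt{a^2 - 2b(m^2-\omega^2)}}{2b},
\end{equation*}\end{linenomath}
so that for $\omega \in (\omega_*, m)$, $R_0(\omega)$ is well-defined, positive, and smooth in $\omega$.

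I would define $R_\omega$ as the unique solution of the Cauchy problem $R(0) = R_0(\omega)$, $R'(0) = 0$. Conservation of $\tfrac12(R')^2 - V(R)$, together with the sign $V'(R_0(\omega)) = R_0(\omega)^2 h'(R_0(\omega)) = -R_0(\omega)^2 \sqrt{a^2-2b(m^2-\omega^2)} < 0$, forces $R_\omega$ to stay in $[0, R_0(\omega)]$ and to start strictly decreasing at $x = 0$. Evenness of $R_\omega$ is inherited from the $x \mapsto -x$ invariance of the ODE. Positivity follows from uniqueness: if $R_\omega(x_0) = 0$ at a finite $x_0$ then $(R_\omega'(x_0))^2 = 2V(0) = 0$, and the Cauchy--Lipschitz theorem would give $R_\omega \equiv 0$. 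The monotone bounded function $R_\omega$ must converge as $x \to +\infty$ to a critical point of $V$ inside $[0, R_0(\omega))$, and $0$ is the only option; linearising the ODE at infinity yields the exponential decay rate $\sqrt{m^2-\omega^2}$. Strict monotonicity on the half-lines gives $R_\omega'(x) < 0$ for $x > 0$, so $x R_\omega'(x) < 0$ off the origin, establishing (b).

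For (a), the smoothness of $\omega \mapsto R_0(\omega)$ combined with standard smooth dependence of ODE solutions on initial data and parameters yields smoothness of $\omega \mapsto R_\omega$ into $C^1$ on any compact interval of $\R$. Promoting this to a smooth map into $H^1_r$ (and thus a smooth family $u_\omega := (R_\omega, -i\omega R_\omega)$ in $X_r$) requires uniform-in-$\omega$ control of the exponential tails, which I would derive from the conserved-energy identity $(R_\omega')^2 = 2V(R_\omega)$ together with the uniform spectral gap $\sqrt{m^2-\omega^2}$ of the linearisation at infinity on compact subintervals of $(\omega_*, m)$. This tail-control step is the principal obstacle; the IVP scheme of \cite{IK93, Mae08, GG17} and the one-dimensional elliptic machinery of \cite{BL83a}, invoked in the introduction, supply the framework to carry it out.
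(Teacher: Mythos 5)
Your proposal is correct and shares its skeleton with the paper's proof: both define \(R_\omega\) as the solution of the Cauchy problem \(R''=G'(R)+(m^2-\omega^2)R\), \(R(0)=R_*(\omega)\), \(R'(0)=0\), where \(R_*(\omega)=\frac{a-\sqrt{a^2-2b(m^2-\omega^2)}}{2b}\) is the first positive zero of the effective potential, and both arrive at the same threshold \(\omega_*=\sqrt{m^2-a^2/(2b)}\). The difference lies in how the qualitative properties are obtained. The paper simply cites Berestycki--Lions: Theorem~5 of \cite{BL83a} for the existence of a positive, even, strictly symmetric-decreasing solution, and Remark~6.3 there for the exponential decay of \(R_\omega,R_\omega',R_\omega''\) (hence \(R_\omega\in H^2_r\) and \(u_\omega\in X_r\)). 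You instead re-derive these facts by hand from the first integral \(\tfrac12(R')^2=V(R)\) with \(V(R)=R^2h(R)\); your arguments (positivity via uniqueness at a putative zero, strict monotonicity on half-lines because \(V>0\) on \((0,R_*(\omega))\), convergence to the origin, linearised decay rate \(\sqrt{m^2-\omega^2}\)) are sound for this quartic potential and make the proof self-contained, at the price of essentially reproving the quoted Berestycki--Lions results in this special case. One point of emphasis differs: item (a) only asks that \(u_\omega\in X_r\) for each fixed \(\omega\), which already follows from the pointwise exponential decay; the \(C^1\) dependence on \(\omega\) that you single out as ``the principal obstacle'' is deferred by the paper to Lemma~\ref{lem.family}, where it is established not by smooth dependence on initial data plus uniform tail control but by an implicit-function-theorem argument for \(\mathscr{G}(\lambda,\phi)=\phi+\mathscr{R}(\lambda)G'(\phi)\) in \(H^1_r\), which bypasses the tail-control issue altogether. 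A cosmetic remark: your \(\omega_*^2=\max\{0,\,m^2-a^2/(2b)\}\) correctly covers the case \(2bm^2\leq a^2\), which the paper's formula leaves implicit.
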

\begin{proof}
We define a function \(R_\omega\) on an open interval of \(\mathbb{R}\) containing the origin
as the solution to the initial value problem
\begin{linenomath}\begin{equation}\label{eq.IVP}
\begin{split}
&R_\omega ''(x) - G'(R_\omega(x)) - (m^2 - \omega^2) R_\omega(x) = 0,\\
&R_\omega (0) = R_*(\omega),\quad R_\omega'(0) = 0
\end{split}
\end{equation}\end{linenomath}
where \(R_*(\omega)\) is the first positive solution to the equation 
\begin{linenomath}\begin{equation}
\label{eq.86}
V(s) := -\frac{2G(s)}{s^2} = m^2 - \omega^2.
\end{equation}\end{linenomath}
From \cite[Theorem~5]{BL83a}, such solution extends to a positive strictly
symmetric-decreasing function on \(\mathbb{R}\). Moreover, from 
\cite[Remark~6.3]{BL83a} the functions \(R_\omega,R_\omega'\) and \(R_\omega''\)
have exponential decay, implying that \(R_\omega\) is in 
\(H^2 _r(\mathbb{R},\mathbb{R})\).
In order to ensure that \(R_*(\omega)\) exists, we restrict to \(\omega > \omega_*\),
\begin{linenomath}\begin{equation*}
\omega_* := (m^2 - \sup(V))^{\frac{1}{2}} = \sqrt{m^2 - \frac{a^2}{2b}}
\end{equation*}\end{linenomath}
which is the smallest \(\omega\) such that \eqref{eq.86} has at least one solution.
We define \(u_\omega := (R_\omega,-i\omega R_\omega)\). 
\end{proof}
\begin{lemma}
\label{lem.family}
The function 
\(u\colon (\omega_*,m)\to X_r\)
is \(C^1 ((\omega_*,m),X_r)\).
\end{lemma}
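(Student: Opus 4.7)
The plan is to establish $C^1$ regularity of $\omega\mapsto R_\omega$ with values in $H^1_r(\mathbb{R},\mathbb{R})$; the $C^1$ regularity of $\omega\mapsto -i\omega R_\omega$ into $L^2_r$ then follows at once from the continuous inclusion $H^1_r\subset L^2_r$ and the chain rule.

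First I would verify that $\omega\mapsto R_*(\omega)$ is $C^\infty$ on $(\omega_*,m)$. Here $V(s)=2as-2bs^2$ is strictly increasing on $(0,a/(2b))$ and attains its maximum at $a/(2b)$, while by construction $R_*(\omega)$ is the smaller positive root of $V(s)=m^2-\omega^2$; hence $R_*(\omega)\in(0,a/(2b))$ and $V'(R_*(\omega))>0$, so the implicit function theorem applied to $V(s)-(m^2-\omega^2)=0$ gives smooth dependence. Next, the classical theorem on smooth dependence of ODE solutions on parameters and initial conditions, applied to \eqref{eq.IVP} with the smooth vector field $(R,P)\mapsto (P,\,G'(R)+(m^2-\omega^2)R)$, ensures that $(x,\omega)\mapsto(R_\omega(x),R_\omega'(x))$ is jointly $C^1$ on $\mathbb{R}\times(\omega_*,m)$. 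The candidate derivative $w_{\omega_0}(x):=\partial_\omega R_\omega(x)\big|_{\omega=\omega_0}$ is the unique solution of the linear variational equation obtained by differentiating \eqref{eq.IVP} in $\omega$.

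The main obstacle is promoting this pointwise $C^1$ dependence to $H^1_r$-level differentiability, which demands uniform exponential decay of $R_\omega$, $R_\omega'$ and $w_\omega$ for $\omega$ in compact subsets of $(\omega_*,m)$. I would fix $[\omega_1,\omega_2]\subset(\omega_*,m)$ and exploit the Hamiltonian first integral
\begin{equation*}
(R_\omega'(x))^2 = 2G(R_\omega(x)) + (m^2-\omega^2)R_\omega(x)^2,
\end{equation*}
which one checks vanishes as $|x|\to\infty$ and whose value at $x=0$ is zero by the defining property of $R_*(\omega)$. Since $R_*(\omega)$ remains in a compact subset of $(0,a/(2b))$ as $\omega$ varies in $[\omega_1,\omega_2]$, this first integral provides uniform $L^\infty$ control on $R_\omega$. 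Linearization near the hyperbolic equilibrium $(0,0)$ of the planar ODE, whose stable eigenvalue $-\sqrt{m^2-\omega^2}$ is bounded away from zero on $[\omega_1,\omega_2]$, together with a comparison/Gronwall argument along the stable manifold, then yields $|R_\omega(x)|+|R_\omega'(x)|\leq Ce^{-\nu|x|}$ with constants $C,\nu>0$ independent of $\omega$. Variation of parameters applied to the linear equation satisfied by $w_\omega$ transfers the same decay to $w_\omega$. Dominated convergence then upgrades pointwise convergence to $H^1_r$-convergence and supplies the continuity of $\omega\mapsto w_\omega$ in $H^1_r$, so that $\omega\mapsto R_\omega\in C^1((\omega_*,m),H^1_r)$, completing the argument.
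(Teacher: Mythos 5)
Your strategy is genuinely different from the paper's. The paper never works with the flow of the ODE: it recasts \eqref{eq.IVP} as the zero set of \(\mathscr{G}(\lambda,\phi)=\phi+\mathscr{R}(\lambda)G'(\phi)\) on \((\omega_*^2-m^2,0)\times H^1_r\) and applies the Implicit Function Theorem directly in \(H^1_r\); the invertibility of \(\partial_\phi\mathscr{G}\) comes from Fredholm theory (identity plus compact) together with the fact that the kernel of the linearized elliptic operator is spanned by the odd function \(R_0'\), hence is trivial in the even space. That route produces a \(C^1\) curve in \(H^1_r\) at once and never requires pointwise decay estimates for the \(\omega\)-derivative. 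Your shooting/parameter-dependence route is legitimate in outline, and its first stages (smoothness of \(R_*(\omega)\), joint \(C^1\) dependence of \((x,\omega)\mapsto(R_\omega(x),R_\omega'(x))\), uniform exponential decay of \(R_\omega\) and \(R_\omega'\) on compact \(\omega\)-intervals) are sound.

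The gap is the sentence asserting that variation of parameters transfers the decay to \(w_\omega=\partial_\omega R_\omega\). The variational equation is \(w''=\big(G''(R_\omega)+m^2-\omega^2\big)w-2\omega R_\omega\). Its homogeneous part has a solution growing like \(e^{+\nu x}\) with \(\nu=\sqrt{m^2-\omega^2}\), while the decaying homogeneous solution is \(R_\omega'\), which is odd; hence among \emph{even} solutions of the inhomogeneous equation exactly one decays and all others grow exponentially. Variation of parameters cannot by itself decide which one \(w_\omega\) is: writing \(w_\omega=c_1(x)R_\omega'+c_2(x)v_2\) with \(v_2\) the growing solution, one finds \(c_2(x)\to c_2(\infty)\), and the entire content of the step is to prove \(c_2(\infty)=0\). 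Pointwise \(\omega\)-differentiability of a family of exponentially decaying functions does not force the derivative to decay (consider \(f_\omega(x)=e^{-x}\sin(\omega e^{2x})\), whose \(\omega\)-derivative grows), so boundedness of \(w_\omega\) must be argued separately, for instance via smooth dependence of the one-dimensional stable manifold of the origin on \(\omega\), or by differentiating in \(\omega\) the quadrature formula \(x=\int_{R_\omega(x)}^{R_*(\omega)}\big((m^2-\omega^2)s^2+2G(s)\big)^{-1/2}\,ds\) furnished by the first integral. Until that is supplied, the dominated-convergence step has no dominating function and the argument does not close; this is exactly the difficulty the paper's functional-analytic proof is designed to bypass.
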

\begin{proof}
We can rely on the argument of \cite[Lemma~20]{SS85}, provided adaptation
to the spatial dimension \(N = 1\) is done. We fix \(\omega_0\in (\omega_*,m)\), 
$ R_0 := R_{\omega_0} $. 
The equation \eqref{eq.IVP} can be 
rewritten as
\(
R_0 = -\mathscr{R}(\lambda_0)G'(R_0)
\)
where \(\lambda_0 := \omega_0^2 - m^2\).
Therefore, it is convenient to define the function 
\begin{linenomath}
\begin{equation*}
\begin{array}{c}
\mathscr{G}\colon (\omega_*^2 - m^2,0)\times H^1 _r (\mathbb{R},\mathbb{R})
\to H^1 _r (\mathbb{R},\mathbb{R}) \\[0.7em]
\mathscr{G}(\lambda,\phi) := \phi + \mathscr{R}(\lambda)G' (\phi).
\end{array}
\end{equation*}
\end{linenomath}
The proof of the regularity
of \(R_\omega\) takes several steps.

(i). \(\mathscr{G}\) is well defined. Since \(G'\) is \(C^1\) and \(G'(0) = 0\), the function
\(G'(\phi)\) is in \(H^1 _r\) from (i) of Proposition~\ref{prop.nem}.
Since \(\mathscr{R}(\lambda) H^1 _r\subseteq H^1 _r\), we have 
\(\phi + \mathscr{R}(\lambda)G' (\phi)\in H^1 _r\). 
(ii).  It is differentiable at every point \((\lambda_0,\phi_0)\). 
More precisely,
\begin{linenomath}\begin{equation*}
\langle \mathscr{G}'(\lambda_0,\phi_0),(\lambda,\phi)\rangle = 
\phi + \lambda\mathscr{R}(\lambda_0)^2 G'(\phi_0)
+\mathscr{R}(\lambda_0)G''(\phi_0)\phi.
\end{equation*}\end{linenomath}
In fact,
\begin{linenomath}\begin{equation*}
\begin{split}
&\,\mathscr{G}(\lambda_0 + \lambda,\phi_0 + \phi)\\
=&\,\phi_0 + \phi + (\mathscr{R}(\lambda_0) + 
\lambda\mathscr{R}(\lambda_0)^2 + o(\lambda))(G'(\phi_0 + \phi))\\
=&\,\phi_0 + \phi + (\mathscr{R}(\lambda_0) + 
\lambda\mathscr{R}(\lambda_0)^2 + o(\lambda))(G'(\phi_0) + G''(\phi_0)\phi + o(\phi))\\
=&\,\mathscr{G}(\lambda_0,\phi_0) + \phi + \lambda\mathscr{R}(\lambda_0)^2 G'(\phi_0) + \mathscr{R}(\lambda_0)G''(\phi_0)\phi
+f
\end{split}
\end{equation*}\end{linenomath}
where
\begin{linenomath}\begin{equation*}
f = o(\lambda)G'(\phi_0 + \phi)
+ \lambda\mathscr{R}(\lambda_0)^2 (G''(\phi_0)\phi + o(\phi)) + \mathscr{R}(\lambda_0)o(\phi).
\end{equation*}\end{linenomath}
The first equality follows from \cite[p.~174,\ Theorem~6.7~of~\S{III}]{Kat95} and the
resolvent equation, \cite[p.~36,\ Eq.~(5.5)~of~\S{I}]{Kat95}. Since \(G'\) is \(C^2\),
and \(G'(0)=G''(0) = 0\), the second equality follows from (ii) of
Proposition~\ref{prop.nem}, that is the notation \(o(\phi)\) applies
in the sense of the \(H^1\) norm. From \(\mathscr{R}(\lambda_0)\in\mathscr{L}(H^1_r,H^1_r)\)
and (iii) of Proposition~\ref{prop.nem}, \(f\)
is \(o(\lambda,\phi)\).
(iii). \(\mathscr{G}\) is in \(C^1 ((\omega_*^2 - m^2,0)\times H^1_r,H^1_r)\). This follows
from
the continuity of the resolvent, \cite[Theorem~6.7~of~\S{III}]{Kat95}, and 
(i) of Proposition~\ref{prop.nem}. 

(iv). \(\partial_\phi\mathscr{G}(\omega_0^2 - m^2,R_0)\in GL(H^1_r)\). Here the conclusions apply
to a specific point in \(H^1_r\), namely \(R_0\), and the compactness result of (iii) of Proposition~\ref{prop.nem} 
is used for the first time. Since \(G'(0) = G''(0) = 0\)  and \(R_0\) decays to zero exponentially the operator
\begin{linenomath}\begin{equation*}
\partial_\phi\mathscr{G} (\omega_0^2 - m^2,R_0) = \phi+\mathscr{R}(\lambda_0)
G''(R_0)\phi
\end{equation*}\end{linenomath}
is a compact perturbation of 
\(I_{H^1_r}\), by (iii) of Proposition~\ref{prop.nem}. Therefore, it is a
Fredholm operator of index zero, by \cite[p.~238,\ Theorem~5.26~of~\S{IV}]{Kat95}.
We can show that \(\partial_\phi\mathscr{G}(\omega_0^2 - m^2,R_0)\) is injective. 
In fact, given
\(\phi\in H^1_r\) such that
\begin{linenomath}\begin{equation*}
\phi+\mathscr{R}(\omega_0^2 - m^2)G''(R_0)\phi = 0,
\end{equation*}\end{linenomath}
there holds 
\begin{linenomath}\begin{equation}
\label{eq.family.5}
-\phi'' + (m^2 - \omega_0^2)\phi + G''(R_0)\phi = 0.
\end{equation}\end{linenomath}
In \(H^1 (\mathbb{R};\mathbb{C})\), the solutions to the elliptic equation above are multiples of \(R_0'\). 
However, \(R_0'\) is an
odd function. Therefore, \(\phi\equiv 0\). Since the Fredholm index is zero, the operator is also surjective. Therefore, it is invertible.

Conclusions (i-iv) allow us to apply the Implicit Function Theorem. There exists
an open interval \(\omega_0^2 - m^2\in J_0\subseteq (\omega_*^2 -m^2,0)\)
and \(\phi\colon J_0\to H^1_r\) such that  
\(
\mathscr{G}(\lambda,\phi_\lambda) = 0
\)
for every \(\lambda\in J_0\), 
and \(\phi_{\omega_0^2 - m^2} = R_0\).
Therefore, 
\begin{linenomath}\begin{equation}
\label{eq.family.4}
-\phi_\lambda'' -\lambda\phi_\lambda - G'(\phi_\lambda) = 0.
\end{equation}\end{linenomath}
To conclude we prove that \(\phi_{\omega^2 - m^2}\) coincides with \(R_\omega\): 
the former is regular, \(R_\omega\) will as also be regular.
There exists \(\delta > 0\) such
that for 
\(\lambda\in (\omega_0^2 - m^2 - \delta,\omega_0^2 - m^2 + \delta)\cap J_0\),
there holds \(\phi_\lambda (0) > 0\), because \(\phi_{\omega_0^2 - m^2} (0) = R_0 (0) > 0\).
From \cite[Theorem~5]{BL83a},
there exists only one even, positive solution, vanishing at infinity to \eqref{eq.family.4}.
Therefore, 
\(\phi_{\omega^2 - m^2} = R_{\omega}\).
Thus, \(R_\omega\) is \(C^1 ((\omega_*,m),H^1_r)\) and
\(u_\omega\in C^1 ((\omega_*,m),X_r)\) as claimed. In fact, since
the second component is a multiple of the first, the regularity is
\(C^1 ((\omega_*,m),H^1 _r\times H^1_r)\).
\end{proof}
\subsection*{Assumption 2}
From (iv) of Proposition~\ref{prop.nem}, \((E - \omega Q)'(u_\omega) = 0\)
for every \(\omega\in(\omega_*,m)\). From 
Proposition~\ref{lem.family}, \(u\in C^1((\omega_*,m),X_r)\), giving
(a) and (b) of \cite[Assumption~2]{GSS87}. Items (c) and (d) of the same assumptions
are summarized in the next proposition.
\begin{proposition}
\(u_\omega\in D(T'(0)^3)\cap D(JIT'(0)^2)\) and \(T'(0)u_\omega\neq 0\).
\end{proposition}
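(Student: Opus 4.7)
The plan is to unpack each condition using the explicit form of the semigroup $T$, the Riesz isomorphism $I$, and the operator $J$, and to reduce every domain condition to the $H^2_r$-regularity of $R_\omega$ already obtained in Proposition~\ref{prop.ground-state}.

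First I would dispose of the two easy claims. Since $T(s)u = e^{-is}u$ is a bounded group on $X_r$, its generator $T'(0) = -i\,\mathrm{id}_{X_r}$ is a bounded operator, so $D(T'(0)^k) = X_r$ for every $k\geq 1$ and the membership $u_\omega \in D(T'(0)^3)$ is automatic. The non-triviality $T'(0)u_\omega = -iu_\omega \neq 0$ follows from $R_\omega(0) = R_*(\omega) > 0$ in Proposition~\ref{prop.ground-state}(b).

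The substantive step is $u_\omega \in D(JIT'(0)^2)$. Here $T'(0)^2 u_\omega = -u_\omega \in X_r$, so the problem reduces to showing $Iu_\omega \in D(J) = I_0(L^2_r \times H^1_r)$. I would identify $Iu_\omega$ explicitly by pairing with an arbitrary $(h,k) \in X_r$ via the definition of the inner product:
\begin{linenomath}\begin{equation*}
\langle Iu_\omega, (h,k) \rangle = m^2(R_\omega,h)_2 + (R_\omega',h')_2 + (-i\omega R_\omega,k)_2.
\end{equation*}\end{linenomath}
Integrating the middle term by parts (legitimate since $R_\omega \in H^2_r$, with boundary terms vanishing from exponential decay) and comparing with the definition of $I_0$ should yield
\begin{linenomath}\begin{equation*}
Iu_\omega = I_0\bigl(m^2 R_\omega - R_\omega'',\ -i\omega R_\omega\bigr),
\end{equation*}\end{linenomath}
whose first argument lies in $L^2_r$ and whose second lies in $H^1_r$. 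Hence $Iu_\omega \in D(J)$, and applying $J$ gives the explicit value $JIT'(0)^2 u_\omega = (i\omega R_\omega,\, m^2 R_\omega - R_\omega'')$ in $X_r$.

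The only analytic input required is $R_\omega \in H^2_r$, i.e.\ that $R_\omega''$ lies in $L^2$; this is the single point where the exponential decay of $R_\omega, R_\omega', R_\omega''$ obtained from \cite{BL83a} enters, and without it neither the integration by parts nor the $L^2_r$-membership of $m^2 R_\omega - R_\omega''$ would be justified. Since that regularity has already been established in the proof of Proposition~\ref{prop.ground-state}, I do not expect any genuine obstacle beyond the bookkeeping of the above identification.
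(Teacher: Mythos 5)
Your argument is correct and follows essentially the same route as the paper: boundedness of \(T'(0)\) gives \(D(T'(0)^3)=X_r\), the Riesz representative of \(T'(0)^2u_\omega=-u_\omega\) is computed by integration by parts and identified as \(I_0\) applied to a pair in \(L^2_r\times H^1_r\) thanks to \(R_\omega\in H^2_r\), and non-vanishing follows from \(R_\omega>0\). (Your version even carries the \(m^2\) coefficient through the computation, which the paper's displayed formula appears to drop.)
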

\begin{proof}
Since \(T'(0)\) is bounded, \(D(T'(0)^3) = X_r\). Also, for every \(\omega\in (\omega_*,m)\),
there holds
\(
T'(0)^2 u_\omega = (-R_\omega,i\omega R_\omega)
\). 
Since \(R_\omega\) is \(H^2 _r\), we have 
\begin{linenomath}\begin{equation*}
\begin{split}
(T'(0)^2 u_\omega,(h,k))_{X_r} &= (-R_\omega',h')_2 + (-R_\omega,h)_2 + (i\omega R_\omega,k)_2 \\
&=
(R_\omega'' - R_\omega,h)_2 + (i\omega R_\omega,k)_2 \\
&= I_0 (R_\omega'' - R_\omega,i\omega R_\omega).
\end{split}
\end{equation*}\end{linenomath}
Since \(R_\omega\in H^2 _r\), \(I(T'(0)^2 u_\omega)\in I_0(L^2_r\times H^1_r)\). Therefore, \(I(T'(0)^2 u_\omega)\in D(J)\),
proving the first of the two statements. The second one follows from 
\(T'(0)u_\omega = (-iR_\omega,-\omega R_\omega)\neq 0\).
\end{proof}
\begin{remark}
Although it is clear from the definition, we wish to stress that 
the operator \(I\) appearing in \cite[Assumption~2]{GSS87} and in the proposition above is the Riesz representation of linear functionals
defined in \eqref{eq.Riesz} and \textsl{not} \(I_0\) defined in 
\eqref{eq.shatah-identification}.
\end{remark}
\begin{remark}
When \(\omega = \omega_*\), the constant function \(R_{\omega_*}\equiv R_* (\omega_*) = (2b)^{-1} a\) solves the initial value problem \eqref{eq.IVP}, but
it is not a square integrable function. When \(\omega = m\),
\(R_* (m) = b^{-1}a\). This the "zero-mass" of the problem \eqref{eq.IVP},
whose existence in \(H^1_r\cap C^2\) 
is guaranteed by \cite[\S5]{BL83a}. However, since
\(R_* (\omega) = R_\omega (0)\to 0\) as \(\omega\to m\), there is no convergence
\(R_\omega\to R_m\) in \(H^1 _r\). In conclusion, \((\omega_*,m)\) is
a maximal interval of existence of a regular one-parameter family.
\end{remark}
\section{The spectrum of the Hessian}
In this section, we prove that \cite[Assumption~3]{GSS87} is satisfied. We consider the Hessian of \(d\) at \(\omega_0\), defined as the bounded operator
\(H\in\mathscr{L}(X_r,X_r)\) such that 
\begin{linenomath}\begin{equation*}
H(\phi,\psi) = I^{-1}((E'' - \omega_0 Q'')(\phi,\psi)),
\end{equation*}\end{linenomath}
where \(I\) is the Riesz isomorphism from \(X_r\) to \(X_r^*\).
The result of Sturm-Liouville Theory of \cite[p.\ 228,\ \S10.4]{Inc44} with the following extension:
if the assumption that \(u\) has two zeroes is replaced by the assumption that
\(u,v\in C^2 _0 (\R)\), then one can still conclude that \(v\) has at least one zero
in \((-\infty,+\infty)\).
\begin{lemma}
\label{lem.essential-spectrum}
\(H\) is a self-adjoint, bounded and Fredholm operator of index zero
on \(X_r\).
\end{lemma}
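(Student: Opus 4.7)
My plan is to show that $H$ is a compact perturbation of an invertible bounded operator on $X_r$, which immediately yields the Fredholm index zero conclusion. By definition of $H$ and of the Riesz isomorphism, $(Hu,v)_{X_r}=\langle(E''-\omega_0 Q'')(u_\omega)u,v\rangle$ for every $u,v\in X_r$. Since $E$ and $Q$ are $C^2$ real-valued functionals on $X_r$, their Hessians are bounded symmetric bilinear forms, so boundedness and self-adjointness of $H$ are immediate; all the work is in the Fredholm claim.

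Inspecting the formula for $E''(u_\omega)$ in item~\ref{item.prop.nem.4} of Proposition~\ref{prop.nem}, one notices that the three free terms $(\phi',h')_2+m^2(\phi,h)_2+(\psi,k)_2$ coincide with $(u,v)_{X_r}$. This allows us to write $H=\mathrm{Id}_{X_r}+L_\omega+K_G$, where $K_G$ absorbs the nonlinear contribution
\begin{equation*}
(K_Gu,v)_{X_r}=\bigl(G''(R_\omega)\Re\phi,h\bigr)_2+\bigl(R_\omega^{-1}G'(R_\omega)\,i\Im\phi,h\bigr)_2
\end{equation*}
and $L_\omega$ encodes the charge contribution $(L_\omega u,v)_{X_r}=-\omega_0[(\phi,ik)_2+(h,i\psi)_2]$. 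Solving the Riesz identity coordinate by coordinate gives the explicit formulae
\begin{align*}
K_G(\phi,\psi) &= \bigl(\mathscr{R}(-m^2)\bigl[G''(R_\omega)\Re\phi+R_\omega^{-1}G'(R_\omega)\,i\Im\phi\bigr],\;0\bigr),\\
L_\omega(\phi,\psi) &= \bigl(-\omega_0\,\mathscr{R}(-m^2)(i\psi),\;\omega_0\, i\phi\bigr).
\end{align*}

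Compactness of $K_G$ will follow from item (iii) of Proposition~\ref{prop.nem}: since $G'(s)/s$ and $G''(s)$ both vanish at $s=0$ and $R_\omega$ together with its derivatives decays exponentially (\cite[Remark~6.3]{BL83a}), the coefficients $G''(R_\omega)$ and $R_\omega^{-1}G'(R_\omega)$ lie in $C^1_0(\R,\R)$, so $K_G\in\mathscr{L}_c(X_r)$. For $\mathrm{Id}+L_\omega$, given $(f,g)\in X_r$ the equation $(\mathrm{Id}+L_\omega)(\phi,\psi)=(f,g)$ reduces, after eliminating $\psi=g-\omega_0 i\phi$, to
\begin{equation*}
(\mathrm{Id}-\omega_0^2\mathscr{R}(-m^2))\phi = f+\omega_0\mathscr{R}(-m^2)(ig),
\end{equation*}
and the identity $\mathrm{Id}-\omega_0^2\mathscr{R}(-m^2)=\mathscr{R}(-m^2)(-\partial_x^2+m^2-\omega_0^2)$, combined with $\omega_0\in(\omega_*,m)$ forcing $m^2-\omega_0^2>0$, shows that this operator is invertible on $L^2_r$ and preserves $H^1_r$. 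Hence $\mathrm{Id}+L_\omega\in GL(X_r)$, and $H=(\mathrm{Id}+L_\omega)+K_G$ is Fredholm of index zero.

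The main obstacle is that $L_\omega$, carrying the $\omega_0 Q''$ term, is not compact on its own: it involves multiplication by $i$ without any spatial decay, so it cannot be reduced to a bounded-domain argument of the type underlying item (iii) of Proposition~\ref{prop.nem}. The remedy is to absorb $L_\omega$ into the invertible factor of the decomposition, which succeeds only because the strict inequality $\omega_0<m$ keeps $-\partial_x^2+m^2-\omega_0^2$ strictly positive and hence invertible.
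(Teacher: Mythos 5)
Your proposal is correct and follows essentially the same route as the paper: the operator you call $\mathrm{Id}+L_\omega$ is exactly the paper's invertible operator $A(\phi,\psi)=(\phi-\mathscr{R}(-m^2)(i\omega_0\psi),\,i\omega_0\phi+\psi)$, the compact remainder $K_G$ is identified the same way via item (iii) of Proposition~\ref{prop.nem} and the decay of $R_\omega$, and the elimination $\psi=g-\omega_0 i\phi$ reducing invertibility to that of $\mathrm{Id}-\omega_0^2\mathscr{R}(-m^2)$ is the paper's computation verbatim. The only cosmetic difference is that the paper exhibits the explicit inverse $\mathrm{Id}+\omega_0^2\mathscr{R}(\omega_0^2-m^2)$ via the resolvent equation, whereas you argue through the factorization $\mathscr{R}(-m^2)(-\partial_x^2+m^2-\omega_0^2)$ and the positivity coming from $\omega_0<m$; both are valid.
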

\begin{proof}
It is convenient to have an explicit expression of the Hessian. 
Given \((\phi,\psi)\in X_r\), 
we set \(H(\phi,\psi) =: (H_1 (\phi,\psi),H_2 (\phi,\psi))\).
Using \((0,k)\in X_r\) as test vector in the formula for the second derivative in Proposition~\ref{prop.nem},
we obtain
\begin{linenomath}\begin{equation*}
H_2 (\phi,\psi) = \psi + i\omega_0\phi.
\end{equation*}\end{linenomath}
Using \((h,0)\) with \(h\in H^2 _r (\R,\C)\), the equality
\begin{linenomath}\begin{equation*}
-H_1'' + m^2 H_1 = -\phi'' + m^2\phi + G''(R_0)\text{Re}(\phi) + iR_0^{-1} G'(R_0)\text{Im}(\phi) - \omega_0 i\psi
\end{equation*}\end{linenomath}
follows. Applying \(\mathscr{R}(-m^2)\) to both sides of the equality, one obtains
\begin{linenomath}\begin{equation*}
H_1 (\phi,\psi) = \phi + 
\mathscr{R}(-m^2)\big(G''(R_0)\text{Re}(\phi) + 
iR_0^{-1} G'(R_0)\text{Im}(\phi) - i\omega_0\psi\big).
\end{equation*}\end{linenomath}
Since \(H^2_r\times L^2_r\) is a dense subset of \(X_r\), the two equalities for \(H_1\) and \(H_2\) hold
on \(X_r\).
Since \(R_0\) vanishes at infinity, both \(G''(R_0)\) and
\(iR_0^{-1} G'(R_0)\) are \(C^1 _0 (\R,\C)\). Therefore, the Hessian is a compact perturbation of the operator
\begin{linenomath}\begin{equation*}
A(\phi,\psi) := 
\big(\phi - \mathscr{R}(-m^2)(i\omega_0\psi),i\omega_0\phi + \psi
\big)
\end{equation*}\end{linenomath}
by (iii) of Proposition~\ref{prop.nem}.
\(A\) is in \(GL(X_r)\). Given \((\alpha,\beta)\in X_r\) the equation \(A(\phi,\psi) = (\alpha,\beta)\) can be solved as follows:
the second component reads \(\psi = \beta - i\omega_0\phi\). A substitution in the first component gives
\begin{linenomath}\begin{equation*}
(I_{H^1_r} - \omega_0^2\mathscr{R}(-m^2))\phi = \alpha + \omega_0 \mathscr{R}(-m^2)i\beta.
\end{equation*}\end{linenomath}
The operator on the lefthand-side is bounded and invertible in \(H^1_r\). In fact, by merely checking
operators composition, through the resolvent equation \cite[p.~36]{Kat95} one can deduce that the inverse is 
\(I_{H^1_r} + \omega_0^2\mathscr{R}(\omega_0^2 -m^2)\). Therefore,
\begin{linenomath}\begin{equation*}
\phi = (I_{H^1_r} + \omega_0^2\mathscr{R}(\omega_0^2 -m^2))\big(\alpha + \omega_0 \mathscr{R}(-m^2)i\beta\big)
\end{equation*}\end{linenomath}
and
\begin{linenomath}\begin{equation*}
\psi = \beta - i\omega_0(I_{H^1_r} + \omega_0^2\mathscr{R}(\omega_0^2 -m^2))\big(\alpha + \omega_0 \mathscr{R}(-m^2)i\beta\big). 
\end{equation*}\end{linenomath}
Since \(A\in GL(X_r)\), it is a Fredholm operator of index zero. Since \(H\) is a compact perturbation of a Fredholm operator of index zero,
by \cite[Theorem~5.26~of~\S{IV}]{Kat95}, 
it is Fredholm operator with index zero.
\end{proof}
\begin{theorem}
For every \(\omega\in (\omega_*,m)\), 
\begin{enumerate}[(a).]
\item the kernel of \(H\) is spanned by \(T'(0)u_\omega\)
\item the operator \(H\) has exactly one negative simple eigenvalue
\item the rest of its spectrum is positive and bounded away from zero.
\end{enumerate}
\end{theorem}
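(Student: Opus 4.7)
The plan is to reduce the spectral analysis of $H$ on the complex Hilbert space $X_r$ to that of two scalar Schr\"odinger operators on the real line. Fix $\omega\in(\omega_*,m)$, write $R = R_\omega$, and decompose any $(\phi,\psi)\in X_r$ as $\phi = \phi_R + i\phi_I$, $\psi = \psi_R + i\psi_I$ with real even components. Substituting the formulas of Proposition~\ref{prop.nem}(iv) into $\langle H(\phi,\psi),(\phi,\psi)\rangle_{X_r} = \langle (E'' - \omega Q'')(u_\omega)(\phi,\psi),(\phi,\psi)\rangle$ and completing the square in the $\psi$-variables yields the key identity
$\langle H(\phi,\psi),(\phi,\psi)\rangle_{X_r} = \|\psi_I + \omega\phi_R\|_2^2 + \|\psi_R - \omega\phi_I\|_2^2 + \langle L_+\phi_R,\phi_R\rangle_2 + \langle L_-\phi_I,\phi_I\rangle_2$,
where $L_\pm := -\partial_x^2 + (m^2-\omega^2) + V_\pm$ with $V_+ := G''(R)$ and $V_- := R^{-1}G'(R)$ act on real-valued radial functions.

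The next step is to describe the spectra of $L_\pm$ on $L^2_r$. The identity $L_- R = 0$ follows at once from \eqref{eq.IVP}, and since $R > 0$ it is the (even) ground state, so $\ker(L_-|_{L^2_r}) = \mathrm{span}(R)$ while the rest of $\sigma(L_-|_{L^2_r})$ is strictly positive with essential spectrum $[m^2-\omega^2,+\infty)$. Differentiating \eqref{eq.IVP} gives $L_+ R' = 0$, and by Proposition~\ref{prop.ground-state}(b) the function $R'$ vanishes exactly at the origin. Sturm-Liouville theory in the extended form recalled at the start of the section (which handles the decay-at-infinity boundary conditions on $\mathbb{R}$) identifies $R'$, having exactly one node, as the first excited state of $L_+$ on $L^2(\mathbb{R})$; therefore $L_+$ has exactly one negative eigenvalue $\lambda_0 < 0$ with strictly positive, hence even, eigenfunction $\chi_0$. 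Since $R'$ is odd and the Wronskian of two linearly independent solutions of a second order ODE is a non-zero constant (incompatible with decay at infinity of a second solution), $\ker L_+ \cap L^2(\mathbb{R}) = \mathrm{span}(R')$ and hence $\ker(L_+|_{L^2_r}) = \{0\}$; the rest of $\sigma(L_+|_{L^2_r})$ is positive and bounded away from zero.

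Combining these facts with the quadratic-form identity proves all three statements. For (a): $(\phi,\psi) \in \ker H$ forces each square term to vanish, giving $\psi_I = -\omega\phi_R$, $\psi_R = \omega\phi_I$, $L_+\phi_R = 0$ and $L_-\phi_I = 0$; hence $\phi_R = 0$, $\phi_I = c R$ and $\psi = c\omega R$, so $(\phi,\psi)$ is proportional to $(iR,\omega R) = -T'(0)u_\omega$. For (b): the identity shows that the Morse index of $H$ on $X_r$ coincides with the Morse index of $L_+$ on the even subspace, which is $1$; the test vector $\phi_R = \chi_0$, $\phi_I = 0$, $\psi_I = -\omega\chi_0$, $\psi_R = 0$ realises a negative value $\lambda_0 \|\chi_0\|_2^2$. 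For (c): Lemma~\ref{lem.essential-spectrum} provides that $H$ is self-adjoint and Fredholm of index zero, and by (a) zero is an isolated eigenvalue of $H$ with one-dimensional eigenspace; combined with (b), the rest of the spectrum is contained in $(0,+\infty)$ and separated from $0$ by a gap.

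The main obstacle is the spectral picture of $L_+$ on $L^2_r$: both the \emph{uniqueness} of the negative eigenvalue and the \emph{absence} of a kernel rely on the Sturm-Liouville machinery in the extended form announced at the start of the section, which is what accommodates the decay-at-infinity boundary condition on the whole line. The quadratic-form decomposition, the kernel identification from it, and the Fredholm gap argument are then essentially algebraic or follow at once from previously established lemmas.
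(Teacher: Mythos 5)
Your proposal reaches the correct conclusions, and parts (a) and (c) are in substance the same as the paper's: the paper also reduces the kernel to $\ker L_+$ and $\ker L_-$ on the even subspace (by testing the bilinear form $(E''-\omega Q'')(u_\omega)$ against $(0,k)$, $(ih,0)$ and $(h,0)$ separately, which is the rigorous version of your ``each square term vanishes'' step --- note that as literally stated that step is not a positivity argument, since $\langle L_+\phi_R,\phi_R\rangle_2$ can be negative; what you really need is that the radical of a direct sum of forms in independent variables is the direct sum of the radicals, i.e.\ you must vary each block in the \emph{bilinear} form), and it also gets the spectral gap in (c) from the self-adjoint Fredholm property. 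Where you genuinely diverge is part (b). The paper proves uniqueness and simplicity of the negative eigenvalue by brute force: it writes the eigenvalue equation $H(\phi,\psi)=\lambda(\phi,\psi)$ in the $X_r$ inner product, eliminates $\psi$ via $\psi=i\omega_0(\lambda-1)^{-1}\phi$, kills $\mathrm{Im}(\phi)$ using $L_-\ge 0$, and then compares the resulting $\lambda$-dependent Sturm--Liouville equation $K_\lambda\phi''-F''(R_0)\phi-G_\lambda\phi=0$ with the equation satisfied by $R_0'$, deriving a contradiction with the strict monotonicity of $R_0$ if two distinct negative eigenvalues existed. Your route instead diagonalises the quadratic form by completing the square in $\psi$ and counts the Morse index: $n(H)=n(L_+)+n(L_-)=1+0$, and since a maximal negative subspace of dimension one forces the spectral projection onto $(-\infty,0)$ to have one-dimensional range, the negative eigenvalue is unique and simple. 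This is cleaner and entirely sidesteps the $(1-\lambda)$-weighted ODE and the Sturm comparison; what it costs you is that the whole weight of the argument now rests on knowing $n(L_+)=1$ exactly (one negative eigenvalue, trivial even kernel), for which you invoke the nodal characterisation of $R'$ as first excited state --- the paper instead outsources precisely this fact to Weinstein's Proposition~4.2 and to the discussion in Grillakis--Shatah--Strauss, so you should either cite those or spell out the oscillation-theory argument on the whole line with the decay boundary conditions. With that reference supplied and the kernel step rephrased via the bilinear form, your proof is complete.
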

\begin{proof}
\(T'(0)u_\omega\in\ker (H)\) follows from the remarks preceding \cite[Eq.~(2.18)]{GSS87}. Now, given an element \((\phi,\psi)\) in the kernel of the Hessian,
from (iv) of Proposition~\ref{prop.nem}, we have  
\begin{linenomath}\begin{equation*}
0 = \langle (E - \omega_0 Q)''(\phi,\psi),(0,k)\rangle = 
(\psi,k)_2  - \omega_0 (\phi,ik)_2
\end{equation*}\end{linenomath}
for every \(k\in L^2 _r\). Therefore, 
\begin{linenomath}\begin{equation*}
\psi = -i\omega_0\phi.
\end{equation*}\end{linenomath}
Now we apply the second derivative to 
\((ih,0)\in X_r\) for every \textsl{real-valued} function \(h\in H^1 _r\). 
From 
\begin{linenomath}\begin{equation*}
\text{Re}(\phi\overline{ih}) = \text{Im}(\phi) h,\quad
\text{Re}(\phi'\overline{ih'}) = \text{Im}(\phi)' h',
\end{equation*}\end{linenomath}
it follows
\begin{linenomath}\begin{equation}
\label{eq.family.3}
\begin{split}
0&=\langle (E - \omega_0 Q)''(\phi,\psi),(ih,0)\rangle \\
&= (\text{Im}(\phi)',h')_2 + m^2(\text{Im}(\phi),h)_2 + 
(R_0^{-1} G'(R_0)\text{Im}(\phi),h)_2 - \omega_0 (ih,i\psi)_2\\
&= 
(\text{Im}(\phi)',h')_2 + m^2(\text{Im}(\phi),h)_2 + 
(R_0^{-1} G'(R_0)\text{Im}(\phi),h)_2 
- \omega_0 (ih,\omega_0\phi)_2 \\
&= 
(\text{Im}(\phi)',h')_2 + m^2(\text{Im}(\phi),h)_2 + (R_0^{-1} G'(R_0)\text{Im}(\phi),h)_2 - \omega_0^2 (\text{Im}(\phi),h)_2 \\
&= 
(L_- (\text{Im}(\phi)),h)_2.
\end{split}
\end{equation}\end{linenomath}
\(L_-\) is the unbounded operator with domain \(H^2 _r\subseteq L^2_r\) defined as
\begin{linenomath}\begin{equation*}
L_- (f) = -f'' + R_0^{-1} G'(R_0)f + (m^2 - \omega_0^2) f.
\end{equation*}\end{linenomath}
Therefore, \(L_- (\text{Im}(\phi)) = 0\).
The kernel of \(L_-\) has dimension one. Since \(R_0\) is in \(\ker(L_-)\), there 
exists \(\mu\in\mathbb{R}\) such that
\(\text{Im}(\phi) = \mu R_0\). Now, we apply the second derivative to 
\((h,0)\in X_r\) for every function \(h\in H^1 (\mathbb{R},\mathbb{R})\).
Therefore,
\begin{linenomath}\begin{equation*}
\begin{split}
0&=\langle (E - \omega_0 Q)''(\phi,\psi),(h,0)\rangle \\
&= (\text{Re}(\phi)',h')_2 + m^2(\text{Re}(\phi),h)_2 + (G''(R_0)\text{Re}(\phi),h)_2 - 
\omega_0 (h,i\psi)_2\\
&= 
(\text{Re}(\phi)',h')_2 + m^2(\text{Re}(\phi),h)_2 + 
(G''(R_0)\text{Re}(\phi),h)_2 - \omega_0 (h,i\cdot (-i\omega_0\phi))_2 \\
&= 
(\text{Re}(\phi)',h')_2 + m^2(\text{Re}(\phi),h)_2 + 
(G''(R_0)\text{Re}(\phi),h)_2 - 
\omega_0^2 (\text{Re}(\phi),h)_2 \\
&= 
(L_+ (\text{Re}(\phi)),h)_2,
\end{split}
\end{equation*}\end{linenomath}
where \(L_+\) is the unbounded operator with domain 
\(H^2\subseteq L^2\) defined as
\begin{linenomath}\begin{equation*}
L_+ (f) = -f'' + G''(R_0)f + (m^2 - \omega_0^2) f.
\end{equation*}\end{linenomath}
Therefore, \(L_+ (\text{Re}(\phi)) = 0\). From the remarks right
after \eqref{eq.family.5} it follows \(\text{Re}(\phi) = 0\).
In conclusion,
\begin{linenomath}\begin{equation*}
(\phi,\psi) = \mu\left(iR_0,\omega_0 R_0\right) = -\mu(-iR_0,-\omega_0R_0)
= -\mu T'(0) u_\omega. 
\end{equation*}\end{linenomath}
(b). From the \cite[Proposition~4.2]{Wei86} and \cite[p.~187]{GSS87},
the operator \(L_+\) has exactly one negative, simple eigenvalue.
We use the notation \(-\alpha_{\omega_0}^2\) and 
\(\chi_{\omega_0}\in H^2 _r (\mathbb{R},\mathbb{R})\)
for this eigenvalue and the corresponding eigenvector. 
There holds
\begin{linenomath}\begin{equation*}
\begin{split}
(H(\chi_{\omega_0},-i\omega_0\chi_{\omega_0}),(\chi_{\omega_0},-i\omega_0\chi_{\omega_0}))_{X_r}  = 
-\alpha_{\omega_0}^2 \|\chi_{\omega_0}\|^2_2 < 0.
\end{split}
\end{equation*}\end{linenomath} 
Therefore, there exists at least one negative eigenvalue. We prove that 
this eigenvalue is unique. Let \((\phi,\psi)\) be an eigenvector
with eigenvalue \(\lambda < 0\), that is
\begin{linenomath}\begin{equation*}
H(\phi,\psi) = \lambda(\phi,\psi).
\end{equation*}\end{linenomath}
Taking the \(X_r\) inner product with \((0,k)\), we obtain 
\begin{linenomath}\begin{equation*}
(\psi,k)_2 - \omega_0 (\phi,ik)_2 = \lambda (\psi,k)_2
\end{equation*}\end{linenomath}
for every \(k\in L^2_r\). Then
\begin{linenomath}\begin{equation}
\label{eq.family.6}
\psi = \frac{i\omega_0}{\lambda - 1}\phi.
\end{equation}\end{linenomath}
Taking the \(X_r\) inner product with \((i\text{Im}(\phi),0)\), 
from \eqref{eq.family.3}  
we obtain
\begin{linenomath}\begin{equation*}
\lambda\|\text{Im}(\phi)'\|_2^2 + m^2\lambda\|\text{Im}(\phi)\|_2^2 =  
(L_- (\text{Im}({\phi})),\text{Im}({\phi}))_2
\end{equation*}\end{linenomath}
implying
\(
(L_-(\text{Im}(\phi),\text{Im}(\phi)))_2\leq 0
\).
From \cite[\S3]{Wei86} \(L_-\geq 0\). Therefore, 
\(\text{Im}(\phi) = 0\), because \(\lambda < 0\).
Therefore, the imaginary part of the first component of every eigenvector is zero. 
We take the inner product with \((h,0)\), where 
\(h\in H^1_r (\mathbb{R},\mathbb{R})\). Then
\begin{linenomath}\begin{equation*}
\begin{split}
\lambda (\phi',h')_2+ m^2\lambda(\phi,h)_2 &= 
(H(\phi,\psi),(h,0))_{X_r} \\
&= (\phi',h') + m^2 (\phi,h)_2 + (G''(R_0)\phi,h)_2 - \omega_0 (h,i\psi)_2 \\
&= (\phi',h') + m^2 (\phi,h)_2 + (G''(R_0)\phi,h)_2 \\
&- \omega_0 (\lambda - 1)^{-1} (h,i\cdot i\omega_0\phi)_2 \\
&= (\phi',h') + m^2 (\phi,h)_2 + (G''(R_0)\phi,h)_2 \\
&+\, \omega_0^2(\lambda - 1)^{-1}
(\phi,h)_2.
\end{split}
\end{equation*}\end{linenomath}
Therefore, \(\phi\) satisfies
the second order differential equation
\begin{linenomath}\begin{equation*}
K_\lambda \phi'' - F''(R_0)\phi - G_\lambda\phi = 0
\end{equation*}\end{linenomath}
where 
\begin{linenomath}\begin{equation*}
K_\lambda := 1 - \lambda,\quad
G_\lambda := -\left(\frac{\omega_0^2}{1 - \lambda} + m^2\lambda\right).
\end{equation*}\end{linenomath}
Suppose that
there are two eigenvectors \((\phi_1,\psi_1)\) and \((\phi_2,\psi_2)\)
corresponding to negative eigenvalues \(\lambda_1 \leq\lambda_2 < 0\).
Clearly, \(1 - \lambda_1\geq 1 - \lambda_2 > 1\). That is, 
\(K_{\lambda_1}\geq K_{\lambda_2} > K_0\). Since
\begin{linenomath}\begin{equation*}
\frac{dG_\lambda}{d\lambda} = -\frac{\omega_0^2}{(1 - \lambda)^2} - m^2 < 0
\end{equation*}\end{linenomath}
we also have \(G_{\lambda_1}\geq G_{\lambda_2} > G_0 := -\omega_0^2\).
Now, suppose that \(\lambda_1 < \lambda_2\), that is 
\(K_{\lambda_2} < K_{\lambda_1}\). Then from 
\cite[p.\ 228,\ \S10.4]{Inc44}, 
\(\phi_2\) has a zero in \(x_*\in (-\infty,+\infty)\). Since \(\phi_2\) is
even, \(|x_*|\) is also a zero of \(\phi_2\).
Taking the derivative in \eqref{eq.IVP}, we obtain
\begin{linenomath}\begin{equation*}
(R_0')'' - F''(R_0) R_0' + \omega_0^2 R_0' = 0.
\end{equation*}\end{linenomath}
Since \(\lambda_2 < 0\), we have \(K_{\lambda_2} > K_0\) and 
\(G_{\lambda_2} > G_0\). Again, from \cite[p.\ 228,\ \S10.4]{Inc44}, \(R_0'\) has a zero \(x_{**}\in (|x_*|,+\infty)\).
This contradicts (b) of Proposition~\ref{prop.ground-state}, according
to which \(R_0'(x) < 0\) for every \(x > 0\). 
Therefore, \(\lambda_1 = \lambda_2 =: \lambda\) and 
\(\phi_1 = k\phi_2\) for some \(k\in\mathbb{R}\). From \eqref{eq.family.6},
\begin{linenomath}\begin{equation*}
\begin{split}
(\phi_1,\psi_1) &= \left(\phi_1,\frac{i\omega_0}{\lambda - 1}\phi_1\right) = 
\left(k\phi_2,k\frac{i\omega_0}{\lambda - 1}\phi_2\right) \\
&= 
k\left(\phi_2,\frac{i\omega_0}{\lambda - 1}\phi_2\right) = k(\phi_2,\psi_2)
\end{split}
\end{equation*}\end{linenomath}
showing that the unique negative eigenvalue is also simple. We denote it
by \(\lambda_{\omega_0}\).

(c). Since \(\lambda_{\omega_0}\) is the unique negative eigenvalue,
the complement of \(\{0,\lambda_{\omega_0}\}\) in \(\sigma(H)\)
is positive. From Lemma~\ref{lem.essential-spectrum}, \(H\) is a self-adjoint Fredholm operator. 
From \cite[Lemma]{Phi96}, there exists
\(a > 0\) such that \(\sigma(H)\cap (-a,a)\) is a finite set of eigenvalues. Then,
the essential spectrum is bounded away from the origin.
\end{proof}
\section{Stability and instability}
\label{ch.4}
Since \cite[Assumption~3]{GSS87} is satisfied, by \cite[Theorem~2]{GSS87}, stability and instability
of \((R_0,-i\omega_0 R_0)\)
relies on the study of the convexity of the function \(d\) in a neighbourhood
of \(\omega_0\). We have 
\begin{linenomath}\begin{equation*}
\begin{split}
d'(\omega) &= (E - \omega Q)'(u_\omega) - Q(u_\omega) = -Q(u_\omega) \\
&= -(i\cdot -i\omega R_\omega,R_\omega)_2 = -\omega\|R_\omega\|_2^2 =: -\sigma(\omega).
\end{split}
\end{equation*}\end{linenomath}
The third equality follows from \eqref{eq.q}.
Then, we will inspect the sign of the derivative \(\sigma\). Calculations of the next proof 
substantially rely on the assumption that \(G\) is a cubic-quartic non-linearity, as defined
in \eqref{eq.cubic-quartic}.
\begin{proof}[Proof~of~Theorem~\ref{thm.stability-and-instability}]
We divide the proof in two steps. In the first one, we evaluate \(d'\).
In the second, we study the sign of \(d''\). 

\textit{First step}. We multiply \eqref{eq.IVP} by \(2R_\omega'\) and integrate. Since
\(R_\omega\) vanishes at infinity, we have
\begin{linenomath}\begin{equation*}
R_\omega '(x)^2 = (m^2 - \omega^2) R_\omega (x) ^2 + 2G(R_\omega(x)).
\end{equation*}\end{linenomath}
Since $ R_\omega' < 0 $, we can write
\begin{linenomath}\begin{equation*}
R_\omega '(x) = -\sqrt{(m^2 - \omega^2) R_\omega (x)^2 + 2G(R_\omega (x))}.
\end{equation*}\end{linenomath}
From the remarks preceding this proof, we have
\begin{linenomath}\begin{equation}
\label{eq.84}
\begin{split}
-d'(\omega) = \sigma(\omega) &= 2\omega\int_0^\infty  R_\omega (x)^2 dx \\
&= 2\omega \int_0^\infty
\frac{R_\omega (x)^2 R_\omega'(x)dx}{-\sqrt{(m^2 - \omega^2) R_\omega (x)^2 + 2G(R_\omega(x))}}\\
&= 2\omega \int_0^\infty
\frac{R_\omega (x) R_\omega'(x)dx}{-\sqrt{m^2 - \omega^2 - 2a R_\omega(x) + 2bR_\omega(x)^2}}\\
&= \omega\int_{0}^{R_*(\omega)} \frac{sds}%
{\sqrt{m^2 - \omega^2 - 2as + 2bs^2}} \\
&= \omega\int_{0}^{R_*(\omega)} \frac{\left(s - \frac{a}{2b}\right)ds}%
{\sqrt{m^2 - \omega^2 - 2as + 2bs^2}} \\
&+ \frac{\omega a}{2b}\int_{0}^{R_*(\omega)} \frac{ds}{\sqrt{m^2 - \omega^2 - 2as + 2bs^2}} =: A + B.
\end{split}
\end{equation}\end{linenomath}
The fifth equality follows from the substitution \(R_\omega (x) = s\) on the interval
\((0,+\infty)\). Since \(R_* (\omega)\) is the first positive zero to 
\(m^2 - \omega^2 = 2aR_* (\omega) - 2bR_* (\omega)^2\), there holds
\begin{linenomath}\begin{equation}
\label{eq.85}
\left|R_* (\omega) - \frac{a}{2b}\right|^2  = 
\frac{a^2}{4b^2} - \frac{m^2 - \omega^2}{2b} = \frac{a^2}{4b^2}(1 - \alpha^2(\omega))
\end{equation}\end{linenomath}
where
\begin{linenomath}\begin{equation}
\label{eq.117}
\alpha(\omega) = \left(\frac{2b(m^2 - \omega^2)}{a^2}\right)^{\frac{1}{2}}.
\end{equation}\end{linenomath}
In order to find a suitable integration by substitution, we 
rearrange the argument of the square root
\begin{linenomath}\begin{equation}
\label{eq.116}
\begin{split}
&\,m^2 - \omega^2 - 2as + 2bs^2 \\
=&\,2aR_* (\omega) - 2bR_* (\omega)^2 - 2as + 2bs^2\\
=&\,2b\left(\bigg(s - \frac{a}{2b}\bigg)^2 - \bigg(R_*(\omega) - \frac{a}{2b}\bigg)^2\right).
\end{split}
\end{equation}\end{linenomath}
From \eqref{eq.116}, we obtain
\begin{linenomath}\begin{equation*}
\begin{split}
A &= \frac{\omega}{\sqrt{2b}}\int_{0}^{R_*(\omega)} \frac{\left(s - \frac{a}{2b}\right)ds}%
{\sqrt{(s - \frac{a}{2b})^2 - 
(R_*(\omega) - \frac{a}{2b})^2}} \\
&= \frac{\omega}{\sqrt{2b}}\left[\sqrt{\left(s - \frac{a}{2b}\right)^2 - 
\left(R_*(\omega) - \frac{a}{2b}\right)^2}\right]_0^{R_*(\omega)} \\
&= -\frac{\omega}{\sqrt{2b}}\sqrt{\frac{a^2}{4b^2} - 
\left(R_*(\omega) - \frac{a}{2b}\right)^2} \\
&= 
-\frac{\omega}{\sqrt{2b}}\sqrt{\frac{a^2}{4b^2} - \frac{a^2}{4b^2}(1 - \alpha^2(\omega))} =
-\frac{\omega a}{(2b)^{\frac{3}{2}}}\alpha(\omega)
\end{split}
\end{equation*}\end{linenomath}
and
\begin{linenomath}\begin{equation*}
\begin{split}
B &=\frac{\omega a}{(2b)^{\frac{3}{2}}}
\int_0^{R_*(\omega)}\frac{ds}{\sqrt{(s - \frac{a}{2b})^2 - 
(R_*(\omega) - \frac{a}{2b})^2}} \\
&=\frac{\omega a}{(2b)^{\frac{3}{2}}}
\Bigg[\ln\bigg|s - \frac{a}{2b} + \sqrt{\left(s - \frac{a}{2b}\right)^2 - 
\left(R_*(\omega) - \frac{a}{2b}\right)^2}\bigg|\Bigg]_0^{R_* (\omega)}\\
&=\frac{\omega a}{(2b)^{\frac{3}{2}}} \ln\bigg|R_*(\omega) - \frac{a}{2b}\bigg|
- \frac{\omega a}{(2b)^{\frac{3}{2}}} 
\ln\bigg| - \frac{a}{2b} + \sqrt{\frac{a^2}{4b^2} - 
\left(R_*(\omega) - \frac{a}{2b}\right)^2}\bigg|\\
&=\frac{\omega a}{(2b)^{\frac{3}{2}}}\ln\bigg|\frac{a}{2b}\sqrt{1 - \alpha^2(\omega)}\bigg|
-\frac{\omega a}{(2b)^{\frac{3}{2}}} \ln\bigg| - \frac{a}{2b} + \frac{a}{2b}\alpha(\omega)\bigg|\\
&= \frac{\omega a}{2(2b)^{\frac{3}{2}}}\ln(1 - \alpha^2(\omega))
-\frac{\omega a}{2(2b)^{\frac{3}{2}}} \ln(1 - \alpha(\omega))^2 \\
&= 
\frac{\omega a}{2(2b)^{\frac{3}{2}}}\ln\left(\frac{1 + \alpha(\omega)}{1 - \alpha(\omega)}\right).
\end{split}
\end{equation*}\end{linenomath}
From \eqref{eq.84}, 
\begin{linenomath}\begin{equation*}
\sigma(\omega) = A + B = -\frac{\omega a}{(2b)^{\frac{3}{2}}}\alpha(\omega) + 
\frac{\omega a}{2(2b)^{\frac{3}{2}}}\ln\left(\frac{1 + \alpha(\omega)}{1 - \alpha(\omega)}\right).
\end{equation*}\end{linenomath}
\textit{Second step}. In order to study the sign of the derivative of $ \sigma $, we represent it as
the composite function of $ \alpha $, which is a strictly decreasing and surjective function from
the interval $ (\omega_*,m) $ to $ (0,1) $. One can check that \(\alpha(\omega_*) = m\)
directly from \eqref{eq.85}. From the definition of \(\alpha\), we have 
\begin{linenomath}\begin{equation*}
\omega = \frac{a}{\sqrt{2b}}\sqrt{\tau - \alpha(\omega)^2}.
\end{equation*}\end{linenomath}
Therefore,
\begin{linenomath}\begin{equation}
\label{eq.119}
\sigma(\omega) = \frac{a^2}{8b^2} k_1 (\alpha(\omega)),\quad\sigma'(\omega) = 
\frac{a^2}{8b^2} k_1'(\alpha(\omega))\alpha'(\omega)
\end{equation}\end{linenomath}
where
\begin{linenomath}\begin{equation*}
k_1 (\alpha) = \sqrt{\tau - \alpha^2}\left(
\ln\left(\frac{1 + \alpha}{1 - \alpha}\right)- 2\alpha\right),\quad\alpha\in (0,1).
\end{equation*}\end{linenomath}
We have
\begin{linenomath}\begin{equation*}
\begin{split}
k_1 '(\alpha) = 
-\frac{\alpha}{\sqrt{\tau - \alpha^2}}
\ln\left(\frac{1 + \alpha}{1 - \alpha}\right) + 
\frac{2\alpha^2\sqrt{\tau - \alpha^2}}{1 - \alpha^2}.
\end{split}
\end{equation*}\end{linenomath}
Then
\begin{linenomath}\begin{equation*}
\begin{split}
k_1 '(\alpha)\sqrt{\tau - \alpha^2}\left(\frac{1 - \alpha^2}{2\alpha^2}\right) 
&= -\frac{1 - \alpha^2}{2\alpha}\ln\left(\frac{1 + \alpha}{1 - \alpha}\right)
+ \tau - \alpha^2\\
&= \tau - \left[\frac{1 - \alpha^2}{2\alpha}\ln\left(\frac{1 + \alpha}{1 - \alpha}\right) + \alpha^2\right] =: \tau - k_2(\alpha).
\end{split}
\end{equation*}\end{linenomath}
From \eqref{eq.119} it follows
\begin{linenomath}\begin{equation}
\label{eq.120}
\sigma'(\omega) = \frac{a^2}{4b^2}\cdot\frac{\alpha^2(1 - \alpha^2)\alpha'(\omega)}%
{\sqrt{\tau - \alpha(\omega)^2}} 
\big(\tau - k_2(\alpha(\omega))\big).
\end{equation}\end{linenomath}
We define
\begin{linenomath}\begin{equation*}
\tau_* := \sup_{\alpha\in(0,1)} k_2(\alpha).
\end{equation*}\end{linenomath}
The behaviour of $ k_2 $ at the endpoints is \(\lim_{\alpha\to 0} k_2(0) = 1\) and \(\lim_{\alpha\to 1} k_2 = 1\) proving
that \(\tau_*\geq 1\). Also,
\begin{linenomath}\begin{equation*}
\begin{split}
k_2'(\alpha) &= -\frac{1}{2}\left(1+\frac{1}{\alpha ^2}\right)\ln\left(\frac{1 + \alpha}{1 - \alpha}\right) + \frac{1}{\alpha} + 2\alpha
\end{split}
\end{equation*}\end{linenomath}
Since \(\lim_{\alpha\to 1} k_2'(\alpha) = -\infty\), we can infer that \(\tau_* > 1\).

\textit{Conclusions}

The case $ \tau\geq\tau_* $.
Suppose that constants \(a,b,m\) defining \(G\) in \eqref{eq.cubic-quartic} as such that \(\tau(a,b,m) > \tau_* = \sup(k_2)\).
Since $ \tau > k_2(\alpha(\omega)) $ on $ (\omega_*,m) $, from \eqref{eq.120} $ \sigma' $
is negative on $ (\omega_*,m) $. Therefore \(d''(\omega) > 0\). From \cite[Theorem~2]{GSS87}, the orbit \eqref{eq.orbit} is stable
for every \(\omega\in (\omega_*,m)\). When \(\tau(a,b,m) = \tau_* = \sup(k_2)\), \(d''\) has at least one zero, as
\(\tau_* > \max\{k_2 (0),k_2 (1)\}\), implying that \(\sup(k_2)\) is achieved in the interior of \(0\leq\alpha\leq 1\).
We can show that \(d''\) has exactly one zero. In fact, 
\begin{linenomath}\begin{equation*}
\alpha^2 k_2' = f_1 + f_2,\quad f_1 := -\frac{1}{2}(1+\alpha ^2)\ln\left(\frac{1 + \alpha}{1 - \alpha}\right),\quad
f_2 := \alpha + 2\alpha^3.
\end{equation*}\end{linenomath}
Since \(f_1\) is a strictly monotonically decreasing function and \(f_2\) 
is a strictly monotonically increasing function, \(\alpha^2 k_2'\) has at most one zero on \((0,1)\).
Then \(k_2'\) has at most one zero on \((0,1)\), which is the maximum point of \(k_2\).
Set \(\alpha_d :=\text{argmax}(k_2)\).
Since \(k_2(\alpha) < \tau_*\) for \(\alpha\neq\alpha_d\), there holds \(d''(\omega) > 0\)
unless \(\omega = \omega_d := \alpha^{-1}(\alpha_d)\). From \cite[Theorem~2]{GSS87} the orbit \eqref{eq.orbit} is stable for
every \(\omega\), as \(d\) is strictly convex in a neighbourhood of \(\omega_d\).

The case $ 1 < \tau < \tau_* $.
If the non-linearity \eqref{eq.cubic-quartic} satisfies \(\tau(a,b,m) < \tau_*\), cases of instability may occur. Since \(1 < \tau\), 
there are \textsl{two} values \(\alpha_1(\tau) > \alpha_2(\tau)\) such that \(k_2 (\alpha_1) = k_2(\alpha_2) = 0\)
and \(k_2 < \tau\) on \((0,\alpha_2)\cup (\alpha_1,1)\) and and \(k_2 > \tau\) on \(\alpha_2,\alpha_1\). Therefore,
\eqref{eq.orbit} is stable on 
\begin{linenomath}\begin{equation*}
(\omega_*,\omega_1 (\tau))\cup(\omega_2 (\tau),m)
\end{equation*}\end{linenomath}
and unstable on 
\begin{linenomath}\begin{equation*}
\omega_1 (\tau)\leq\omega\leq\omega_2 (\tau)
\end{equation*}\end{linenomath}
where \(\omega_i (\tau) = \alpha^{-1}(\alpha_i (\tau))\) for \(i=1,2\).

The case $ 0 < \tau\leq 1 $.
This is the case where stability and instability subsets of \((\omega_*,m)\) are both connected. If \(\tau > 0\), there
is a unique \(\alpha_3 = \alpha(\tau)\) such that \(k_2(\alpha_3) = \tau\) and \(k_2(\alpha_3) < \tau\) if
\(\alpha < \alpha_3\) and \(k_2 (\alpha_3) > \tau\) if \(\alpha > \alpha_3\). Therefore,
from \cite[Theorem~2]{GSS87}
there exists \(\omega_3 := \alpha^{-1}(\alpha_3)\) such that \eqref{eq.orbit} is stable on
\begin{linenomath}\begin{equation*}
(\omega_*,\omega_3(\tau))
\end{equation*}\end{linenomath} 
and unstable on 
\begin{linenomath}\begin{equation*}
\omega_3 (\tau)\leq\omega < m.
\end{equation*}\end{linenomath}
\end{proof}
The case \(\tau = 0\) does not occur due to the restrictions set on \(a,b\)
and \(m\). If we allowed \(\tau = 0\), we would obtain a quadratic Klein-Gordon
equation, as the non-linearity is the pure power \(-as^2\). This case has been covered in the calculations of
\cite[p.~325]{Sha83} which apply to the one-dimensional cases as well, even if the
author set the restriction \(n\geq 3\) in the introduction of the paper. We highlight the sharp change in the stability scenario, as in the cases (ii) and (iii)
of Theorem~1, orbits become unstable as \(\omega\) increases, while in the
pure-power case, orbits become stable as \(\omega\) increases.
\def\cprime{$'$} \def\cprime{$'$} \def\cprime{$'$} \def\cprime{$'$}
  \def\cprime{$'$} \def\cprime{$'$} \def\cprime{$'$} \def\cprime{$'$}
  \def\cprime{$'$} \def\polhk#1{\setbox0=\hbox{#1}{\ooalign{\hidewidth
  \lower1.5ex\hbox{`}\hidewidth\crcr\unhbox0}}} \def\cprime{$'$}
  \def\cprime{$'$} \def\cprime{$'$}
\providecommand{\bysame}{\leavevmode\hbox to3em{\hrulefill}\thinspace}
\providecommand{\MR}{\relax\ifhmode\unskip\space\fi MR }
\providecommand{\MRhref}[2]{%
  \href{http://www.ams.org/mathscinet-getitem?mr=#1}{#2}
}
\providecommand{\href}[2]{#2}


\begin{thebibliography}{10}
\bibitem{ACF14}
Riccardo Adami, Claudio Cacciapuoti, Domenico Finco, and Diego Noja,
  \emph{Variational properties and orbital stability of standing waves for
  {NLS} equation on a star graph}, J. Differential Equations \textbf{257}
  (2014), no.~10, 3738--3777. \MR{3260240}

\bibitem{AP93}
Antonio Ambrosetti and Giovanni Prodi, \emph{A primer of nonlinear analysis},
  Cambridge Studies in Advanced Mathematics, vol.~34, Cambridge University
  Press, Cambridge, 1993. \MR{1225101}

\bibitem{And71}
David~LT Anderson, \emph{Stability of time-dependent particlelike solutions in
  nonlinear field theories. ii}, Journal of Mathematical Physics \textbf{12}
  (1971), no.~6, 945--952.

\bibitem{BGM89}
I.~V. Barashenkov, A.~D. Gocheva, V.~G. Makhan\cprime~kov, and I.~V. Puzynin,
  \emph{Stability of the soliton-like ``bubbles''}, Phys. D \textbf{34} (1989),
  no.~1-2, 240--254. \MR{982390}

\bibitem{BJS16}
Thomas Bartsch, Louis Jeanjean, and Nicola Soave, \emph{Normalized solutions
  for a system of coupled cubic {S}chr\"{o}dinger equations on {$\mathbb{R}^3$}},
  J. Math. Pures Appl. (9) \textbf{106} (2016), no.~4, 583--614. \MR{3539467}

\bibitem{BWW07}
Thomas Bartsch, Zhi-Qiang Wang, and Juncheng Wei, \emph{Bound states for a
  coupled {S}chr\"{o}dinger system}, J. Fixed Point Theory Appl. \textbf{2}
  (2007), no.~2, 353--367. \MR{2372993}

\bibitem{BBBM10}
J.~Bellazzini, V.~Benci, C.~Bonanno, and A.~M. Micheletti, \emph{Solitons for
  the nonlinear {K}lein-{G}ordon equation}, Adv. Nonlinear Stud. \textbf{10}
  (2010), no.~2, 481--499. \MR{2656691 (2011d:35427)}

\bibitem{BBGM07}
J.~Bellazzini, V.~Benci, M.~Ghimenti, and A.~M. Micheletti, \emph{On the
  existence of the fundamental eigenvalue of an elliptic problem in {$\mathbb R\sp
  N$}}, Adv. Nonlinear Stud. \textbf{7} (2007), no.~3, 439--458. \MR{2340279
  (2008g:35048)}

\bibitem{BL83a}
H.~Berestycki and P.-L. Lions, \emph{Nonlinear scalar field equations. {I}.
  {E}xistence of a ground state}, Arch. Rational Mech. Anal. \textbf{82}
  (1983), no.~4, 313--345. \MR{695535}

\bibitem{Cor16a}
Sim\~{a}o Correia, \emph{Ground-states for systems of {$M$} coupled semilinear
  {S}chr\"{o}dinger equations with attraction-repulsion effects:
  characterization and perturbation results}, Nonlinear Anal. \textbf{140}
  (2016), 112--129. \MR{3492731}

\bibitem{GG17}
D.~Garrisi and V.~Georgiev, \emph{Orbital stability and uniqueness of the
  ground state for the non-linear {S}chr\"odinger equation in dimension one},
  Discrete Contin. Dyn. Syst. \textbf{37} (2017), no.~8, 4309--4328.
  \MR{3642266}

\bibitem{Gar12}
Daniele Garrisi, \emph{On the orbital stability of standing-wave solutions to a
  coupled non-linear {K}lein-{G}ordon equation}, Adv. Nonlinear Stud.
  \textbf{12} (2012), no.~3, 639--658. \MR{2976057}

\bibitem{GV89}
J.~Ginibre and G.~Velo, \emph{The global {C}auchy problem for the nonlinear
  {K}lein-{G}ordon equation. {II}}, Ann. Inst. H. Poincar\'{e} Anal. Non
  Lin\'{e}aire \textbf{6} (1989), no.~1, 15--35. \MR{984146}

\bibitem{GJ16}
Tianxiang Gou and Louis Jeanjean, \emph{Existence and orbital stability of
  standing waves for nonlinear schr{\"o}dinger systems}, Nonlinear Analysis
  \textbf{144} (2016), 10--22.

\bibitem{GSS87}
Manoussos Grillakis, Jalal Shatah, and Walter Strauss, \emph{Stability theory
  of solitary waves in the presence of symmetry. {I}}, J. Funct. Anal.
  \textbf{74} (1987), no.~1, 160--197. \MR{901236 (88g:35169)}

\bibitem{GSS90}
\bysame, \emph{Stability theory of solitary waves in the presence of symmetry.
  {II}}, J. Funct. Anal. \textbf{94} (1990), no.~2, 308--348. \MR{1081647
  (92a:35135)}

\bibitem{IK93}
Iliya~D. Iliev and Kiril~P. Kirchev, \emph{Stability and instability of
  solitary waves for one-dimensional singular {S}chr\"{o}dinger equations},
  Differential Integral Equations \textbf{6} (1993), no.~3, 685--703.
  \MR{1202566}

\bibitem{Inc44}
E.~L. Ince, \emph{Ordinary {D}ifferential {E}quations}, Dover Publications, New
  York, 1944. \MR{0010757 (6,65f)}

\bibitem{Kat95}
Tosio Kato, \emph{Perturbation theory for linear operators}, Classics in
  Mathematics, Springer-Verlag, Berlin, 1995. \MR{MR1335452 (96a:47025)}

\bibitem{Lee81}
T.~D. Lee, \emph{Particle physics and introduction to field theory},
  Contemporary Concepts in Physics, vol.~1, Harwood Academic Publishers, Chur,
  1981, Translated from the Chinese. \MR{633154}

\bibitem{LZ21}
Houwang Li and Wenming Zou, \emph{Normalized ground states for semilinear
  elliptic systems with critical and subcritical nonlinearities}, J. Fixed
  Point Theory Appl. \textbf{23} (2021), no.~3, Paper No. 43, 30. \MR{4287320}

\bibitem{Lio84a}
P.-L. Lions, \emph{The concentration-compactness principle in the calculus of
  variations. {T}he locally compact case. {I}}, Ann. Inst. H. Poincar\'e Anal.
  Non Lin\'eaire \textbf{1} (1984), no.~2, 109--145. \MR{778970 (87e:49035a)}

\bibitem{Lio84b}
\bysame, \emph{The concentration-compactness principle in the calculus of
  variations. {T}he locally compact case. {II}}, Ann. Inst. H. Poincar\'e Anal.
  Non Lin\'eaire \textbf{1} (1984), no.~4, 223--283. \MR{778974 (87e:49035b)}

\bibitem{LNW16}
Chuangye Liu, Nghiem~V. Nguyen, and Zhi-Qiang Wang, \emph{Existence and
  stability of solitary waves of an {$m$}-coupled nonlinear {S}chr\"{o}dinger
  system}, J. Math. Study \textbf{49} (2016), no.~2, 132--148. \MR{3518232}

\bibitem{Mae08}
Masaya Maeda, \emph{Stability and instability of standing waves for
  1-dimensional nonlinear {S}chr\"{o}dinger equation with multiple-power
  nonlinearity}, Kodai Math. J. \textbf{31} (2008), no.~2, 263--271.
  \MR{2435895}

\bibitem{NTV14}
Benedetta Noris, Hugo Tavares, and Gianmaria Verzini, \emph{Existence and
  orbital stability of the ground states with prescribed mass for the
  {$L^2$}-critical and supercritical {NLS} on bounded domains}, Anal. PDE
  \textbf{7} (2014), no.~8, 1807--1838. \MR{3318740}

\bibitem{Oht95}
Masahito Ohta, \emph{Stability and instability of standing waves for
  one-dimensional nonlinear {S}chr\"{o}dinger equations with double power
  nonlinearity}, Kodai Math. J. \textbf{18} (1995), no.~1, 68--74. \MR{1317007}

\bibitem{Phi96}
John Phillips, \emph{Self-adjoint {F}redholm operators and spectral flow},
  Canad. Math. Bull. \textbf{39} (1996), no.~4, 460--467. \MR{MR1426691
  (98b:47017)}

\bibitem{Ros65}
Gerald Rosen, \emph{Particlelike solutions to nonlinear scalar wave theories},
  J. Mathematical Phys. \textbf{6} (1965), 1269--1272. \MR{181285}

\bibitem{Sha83}
Jalal Shatah, \emph{Stable standing waves of nonlinear {K}lein-{G}ordon
  equations}, Comm. Math. Phys. \textbf{91} (1983), no.~3, 313--327.
  \MR{723756}

\bibitem{SS85}
Jalal Shatah and Walter Strauss, \emph{Instability of nonlinear bound states},
  Comm. Math. Phys. \textbf{100} (1985), no.~2, 173--190. \MR{804458
  (87b:35159)}

\bibitem{Wei86}
Michael~I. Weinstein, \emph{Lyapunov stability of ground states of nonlinear
  dispersive evolution equations}, Comm. Pure Appl. Math. \textbf{39} (1986),
  no.~1, 51--67. \MR{820338 (87f:35023)}
\end{thebibliography}
\end{document}